\newtheorem{theorem}{Theorem}[section]
\newtheorem{lemma}[theorem]{Lemma}
\newtheorem{example}{Example}
\newtheorem{remark}{Remark}[section]
\renewcommand{\Re}{\textrm{Re}}
\theoremstyle{definition}
\newcommand{\C}[0]{\mathbb{C}}
\newcommand{\R}[0]{\mathbb{R}}
\newcommand{\Z}[0]{\mathbb{Z}}
\newcommand{\N}[0]{\mathbb{N}}
\newcommand{\cS}[0]{\mathcal{S}}
\newcommand{\md}[0]{\mathrm{d}}
\newcolumntype{M}[1]{>{\centering\arraybackslash}m{#1}}
\newcolumntype{N}{@{}m{0pt}@{}}
\numberwithin{equation}{section}
\begin{document}

\title{On Turing's method for Artin $L$-functions and the Selberg class}
\author[N.\ Paloj\"arvi, \and T.\ Zhao]{Neea Paloj\"arvi and Tianyu Zhao}

\address{
    NP: School of Science, The University of New South Wales (Canberra), ACT 2600, Australia.
}
\email{n.palojarvi@unsw.edu.au}
\address{
    TZ: Department of Mathematics, The Ohio State University, 231 West 18th
    Ave, Columbus, OH 43210, USA.
}
\email{zhao.3709@buckeyemail.osu.edu}

\subjclass[2020]{Primary: 11M41, 11Y35, 11L07.}
\keywords{$L$-functions, Selberg class, explicit bounds, Turing's method}


\begin{abstract}
We derive explicit bounds for two general classes of $L$-functions, improving and generalizing earlier known estimates. These bounds can be used, for example, to apply Turing's method for determining the number of zeros up to a given height. 
\end{abstract}

\maketitle

\section{Introduction}

Estimation of the size of $L$-functions is a fundamental problem in analytic number theory, especially due to its intimate connection to the study of their zeros. For example, for the Riemann zeta-function $\zeta(s)$, the prototypical example of an $L$-function, classical methods for finding zeros have long been known. In 1953, Turing \cite{Turing1953} proposed an efficient method that provides an upper bound for the number of zeros in a specified range, which can thus be used to confirm the completeness of a list of computed zeros in the range, or to partially verify the truth of the Riemann hypothesis. The central ingredient of this method is a quantitative bound on a definite integral of the argument of $\zeta(s)$, which further boils down to bounding $|\zeta(s)|$ on the critical line. See Trudgian's work \cite{Trudgian2011} for refinements and extensions to Dirichlet $L$-functions and Dedekind zeta-functions. 

The present article builds on \cite{Booker2006} where Booker developed Turing's method for a general class of $L$-functions, including Artin $L$-functions, and used it partially verify Artin's conjecture in certain cases. We aim to accomplish two things. We first refine the various estimates in \cite{Booker2006}, which ultimately leads to an improved bound on the integral of the argument, a key component of Turing's method. A potentially useful result proved along the way is an explicit convexity bound for more general $L$-functions that resembles, for instance, Radamacher's estimate \cite[Theorems 3 and 4]{Rademacher1959} for Dirichlet $L$-functions and Dedekind zeta-functions. This improves a number of results in the existing literature, as we shall briefly discuss in Section \ref{sec:examples}. Second, we present the analogue of Turing's method for $L$-functions in the Selberg class, highlighting the similarities and variations compared to the first case.

We begin with the definitions and axioms for the two classes of $L$-functions we are concerned with.

\subsection{Selberg class}
The Selberg class of functions $\cS$ was introduced in~\cite{SelbergOldAndNew} and consists of Dirichlet series
\begin{equation*}
\label{eq:DS}
L(s) = \sum_{n=1}^{\infty} \frac{a(n)}{n^s}, \quad \Re(s)>1
\end{equation*}
where $s=\sigma+i  t$ for real numbers $\sigma$ and $t$, satisfying the following axioms:
\begin{enumerate}[(i)]
    \item \emph{Ramanujan hypothesis}. We have $a(n)\ll_{\varepsilon} n^{\varepsilon}$ for any $\varepsilon>0$.
    \item \emph{Analytic continuation}. There exists $k=k_L\in\N_{0}$ such that $(s-1)^{k_L}L(s)$ is an entire function of finite order.
    \item \emph{Functional equation}. The function $L(s)$ satisfies $\mathcal{L}(s)=\omega\overline{\mathcal{L}(1-\bar{s})}$, where
    \begin{equation}
    \label{eq:SelbergFunctionEq}
    \mathcal{L}(s)=L(s)N^s\prod_{j=1}^{f}\Gamma\left(\lambda_j s+\mu_j\right)
    \end{equation}
    with $\left(N,\lambda_j\right)\in\R_{+}^2$, and $\left(\mu_j,\omega\right)\in\C^2$ with $\Re(\mu_j)\geq 0$ and $|\omega|=1$. 
    \item \emph{Euler product}. The function $L(s)$ has a product representation
    \[
    L(s) = \prod_{p}\exp{\left(\sum_{l=1}^{\infty}\frac{b\left(p^l\right)}{p^{ls}}\right)}, \quad \Re(s)>1
    \]
    where the coefficients $b\left(p^l\right)$ satisfy $b\left(p^l\right)\le C_L  p^{l\theta_{L}}$ for some constants $C_L$ and $0\leq\theta_{L}<1/2$ for all $p,l$.
\end{enumerate}
We note that a Selberg class function may have several different representations in the form \eqref{eq:SelbergFunctionEq}, and from the assumptions it follows that $a(1)=1$. 

Examples of functions in $\cS$ include the Riemann zeta-function $\zeta(s)$ and Dirichlet $L$-functions $L(s,\chi)$ for a primitive characters $\chi$ modulo $q$. As seen from the definitions, elements in $\cS$ are assumed to satisfy similar types of properties as $\zeta(s)$. For example, the \textit{Generalized Riemann hypothesis} claims that for all zeros $\rho$ in the strip $0 \leq \Re(s) \leq 1$ that do not arise from the poles of the gamma-factors, we have $\Re(\rho)=1/2$.

In some parts of this article, we can benefit from the fact that we replce axiom (iv) with a more detailed axiom called \textit{the polynomial Euler product}. This means that we assume that
    \[
    L(s) = \prod_{p}\prod_{j=1}^{l}\left(1-\frac{\alpha_j(p)}{p^s}\right)^{-1}, \quad \Re(s)>1,
    \]
    where $l\in\N$ is the \emph{order of a polynomial Euler product} and $\alpha_j(p)\in\C$, $1\leq j\leq l$, are some functions which are defined for every prime number $p$. We also denote 
\begin{equation*}
    \label{def:lambda}
    \lambda:=\prod_{j=1}^f \lambda_j^{2\lambda_j}.
\end{equation*}
The term $\lambda$ is invariant, meaning that for a fixed Selberg class function $F$, the value of the term $\lambda$ is always the same for even different versions of the functional equation \eqref{eq:SelbergFunctionEq} (see e.g. \cite{odzak2011}).

\subsection{A general class including Artin $L$-functions}
\label{sec:Artin}

For convenience we record the assumptions in \cite[Section 1.3]{Booker2006}. 
\begin{itemize}
    \item $L(s)$ has a polynomial Euler product of order $r$, as in (v) above:
    \[
    L(s) = \prod_{p}\prod_{j=1}^{r}\left(1-\frac{\alpha_j(p)}{p^s}\right)^{-1}, \quad \Re(s)>1,
    \]
    where $|\alpha_j(p)|\leq p^{\theta}$ for some $\theta<1/2$.

    \item Define 
    \begin{align*}
        \Lambda(s):=\gamma(s)L(s), \quad
        \gamma(s):=\omega N^{(s-1/2)/2}\prod_{j=1}^r \left(\pi^{-s/2}\Gamma\left(\frac{s+\mu_j}{2}\right)\right)
    \end{align*}
    where $\Re(\mu_j)\geq -\theta$ and $|\omega|=1$ such that the functional equation $\Lambda(s)=\overline{\Lambda(1-\overline{s})}$ is satisfied.

    \item Let $Q$ be the analytic conductor:
    \begin{equation}
    \label{eq:defQ}
    Q(s):=N\prod_{j=1}^r \frac{s+\mu_j}{2\pi}.
    \end{equation}
    Also define $\chi(s):=\frac{\overline{\gamma(1-\overline{s})}}{\gamma(s)}$ such that $L(s)=\chi(s)\overline{L(1-\overline{s})}$.
    \item $L(s)$ may have finitely many poles, all lying on the 1-line. Label them by $1+\tau_k$ for $k=1,\ldots, m$. From the functional equation we see that each $\tau_k$ coincides with $-\mu_j$ for some $j$. Let
    \[
    P(s):=\prod_{k=1}^m (s-\tau_k),
    \]
    then $\Lambda(s)P(s)P(s-1)$ is entire.
\end{itemize}

\subsection{Turing's method}
\label{sec:Turing}
Turing's method provides a way to determine the zeros of an $L$-function assuming that they are simple. It is based on the idea that simple zeros in the half-line are located between sign changes of the function $\Lambda(1/2+it)$. Here $\Lambda(s)$ is as in Section \ref{sec:Artin} if we consider Artin $L$-functions and $\Lambda_{L}(s):=z\mathcal{L}(s)$ if we are considering the Selberg class functions. Here $z$ is a certain complex number with $|z|=1$ that is given in Section \ref{sec:rigorous}. Note that $\Lambda_{L}(s)=\overline{\Lambda_{L}(1-\overline{s})}$, and thus $\Lambda_{L}(1/2+it) \in \mathbb{R}$. 

We now have a closer look at the method. If $t$ is not the ordinate of a zero or pole of $\Lambda$, let us define
\begin{equation}
\label{eq:defS}
    S(t):=\frac{1}{\pi}\Im\left(\int_{1/2}^\infty \frac{L'}{L}(\sigma+it)\, d\sigma\right),
\end{equation}
otherwise let $S(t)=\lim_{\varepsilon \to 0^+} S(t+\varepsilon)$. Moreover, for $t_1<t_2$, let $N(t_1, t_2)$ denote the number of zeros of a function $L$ with imaginary part in $(t_1, t_2]$ counting with multiplicity. If neither $t_1$ nor $t_2$ is the ordinate of the zero or pole, then, as in \cite[Equation (4--1)]{Booker2006}, we can deduce that
\begin{equation*}
    N(t_1, t_2)=\frac{1}{2\pi}\Im\int_C \frac{\Lambda'}{\Lambda}(s) \, ds=\frac{1}{\pi}\Im \int_{C \cap H} \frac{\gamma'}{\gamma}(s) \, ds+\frac{1}{\pi}\Im \int_{C \cap H} \frac{L'}{L}(s) \, ds,
\end{equation*}
where $C$ is the counterclockwise oriented rectangle with corners at $2+it_1$, $2+it_2$, $-1+it_2$ and $-1+it_1$, and $H$ is the half-plane $\{s \in \mathbb{C}: \Re(s) \geq 1/2\}$. The definition of $\gamma(s)$ depends on whether we are applying the method for Artin $L$-functions or Selberg class functions.
Hence, we have
\begin{equation*}
    N(t_1, t_2)=\frac{1}{\pi} \left[\log{\gamma_{L}(s)}\right]_{1/2+it_1}^{1/2+it_2}+S(t_2)-S(t_1).
\end{equation*}
Let us choose the branch for $\gamma(s)$ by using the principal branch of $\log{\Gamma}$, and set
\begin{equation*}
    \Phi(t):=\frac{1}{\pi}\left(\arg{\omega}+\frac{t\log{N}}{2}-\frac{\log{\pi}}{2}\left(rt+\Im\left(\sum_{j=1}^r \mu_j\right)\right)+\Im\left(\sum_{j=1}^r \log{\Gamma\left(\frac{1/2+it+\mu_j}{2}\right)}\right)\right)
\end{equation*}
for Artin $L$-functions and
\begin{equation*}
    \Phi(t):=\frac{1}{\pi}\left(t\log{N}+\Im\left(\sum_{j=1}^f \log{\Gamma\left(\frac{\lambda_j}{2}+i\lambda_jt+\mu_j\right)}\right)\right)
\end{equation*}
for Selberg class functions. Let $N(t):=\Phi(t)+S(t)$, so that $N(t_1, t_2)=N(t_2)-N(t_1)$.

From the sign changes of $\Lambda(1/2+it)$ we can find simple zeros between $t_1$ and $t_2$ on the line $\Re(s)=1/2$. If it turns out that all of the zeros having imaginary parts between $t_1$ and $t_2$ lie on the $\frac{1}{2}$-line and are simple, then we can find all such zeros. To verify that we have found all of the zeros with imaginary parts in $(t_1,t_2]$, we remember that $S(t)=N(t)-\Phi(t)$ has a mean value $0$. Hence, the graph of $N(t_0,t)-\Phi(t)$, for a real number $t_0$, oscillates around a constant value. Comparing this graph to the graph presenting the found number of zeros with imaginary parts in $(t_0,t]$ minus $\Phi(t)$, we can find if we have missed any zeros. Namely, for any missed zero there should be an obvious difference between these two graphs.

Hence, in order to apply Turing's method, we would like to estimate the function $\int_{t_1}^{t_2} S(t)\,\md t$ and present a fast way to compute $\Lambda(1/2+it)$. An improved estimate for $\int_{t_1}^{t_2} S(t)\,\md t$ is provided in Theorem \ref{thm:S} for Artin $L$-functions and in Theorem \ref{thm:SelbergL} for certain functions in $\cS$ (including most, if not all, of the known cases of interest). A fast way to determine the function $\Lambda(1/2+it)$ in the case of Artin $L$-functions was already provided by Booker in \cite[Section 5]{Booker2006}. We revise this method for a large subset of $\cS$ in Section \ref{sec:rigorous}.


\section{Main results}
\label{sec:results}

\subsection{Results}
Our main developments for the previous research are
\begin{itemize}
    \item using Stirling's formula to obtain better estimates for $|L(s)|$ in $[-\varepsilon, 1+\varepsilon]$ (see Lemma \ref{lemma:L}). Here, we have chosen $\varepsilon \in (0,1/2]$ instead of $\varepsilon =1/2$, which gives more freedom to optimize the parameters.
    \item deriving estimates for $|L(s)|$ for the Selberg class functions (under some additional assumptions; see Remark \ref{rmk:SelbergLs}).
    \item the previous two points lead to (improved) estimates for the integral of the function $S(t)$ defined in \eqref{eq:defS}; see our main theorems below.
    \item revising computations involved in Turing's method for the Selberg class (under some additional assumptions; see Section \ref{sec:rigorous}).
\end{itemize}

\begin{theorem}
\label{thm:S}
Let $X>5$. Assume $\varepsilon \in (\theta, 1/2]$, $t_2 \geq t_1$, $\left|t_2+\Im(\mu_j)\right|\geq 3/2$ and $(t_1+\Im(\mu_j))^2 \ge \left(2+\varepsilon+\Re(\mu_j)\right)^2+X^2$ for all $j=1,...,r$. Then
    \begin{multline*}
        \pi \int_{t_1}^{t_2} S(t)\,\md t \leq \left(\frac{1}{16}+\frac{\varepsilon(1+\varepsilon)}{4}\right) \log |Q(3+\varepsilon+it_2)|+\frac{A_\varepsilon-1}{2}\log |Q(1+\varepsilon+it_1)| \\
        +rc_\theta(\varepsilon)+\frac{r\left(8.3+0.09A_\varepsilon+(1/2+\varepsilon)(2+\varepsilon)/5\right)}{X},
    \end{multline*}
    where $Q(s)$ is as in \eqref{eq:defQ}, 
     \begin{equation}
    \label{def:Aepsilon}
        A_\varepsilon:=\left(\frac{1}{2}+\varepsilon\right)\log{\left(1+\frac{1}{1/2+\varepsilon}\right)}+\log{\left(\frac{3}{2}+\varepsilon\right)}.
    \end{equation}
    and
    \begin{equation}
        \begin{split}
        \label{eq:cdef}
            z_\theta(\sigma):=& \left(\frac{\zeta(2\sigma+2\theta)\zeta(2\sigma-2\theta)}{\zeta(\sigma+\theta)\zeta(\sigma-\theta)}\right)^{1/2}\\
            Z_\theta(\sigma):=&\left(\zeta(\sigma+\theta)\zeta(\sigma-\theta)\right)^{1/2},\\
            c_\theta(\varepsilon):=&\left(\frac{1}{2}+\varepsilon\right)\log Z_\theta(1+\varepsilon)+\int_{1+\varepsilon}^\infty \log Z_\theta(\sigma)\, \md \sigma-\int_{1+\varepsilon}^{2+\varepsilon} \log z_\theta(\sigma)\, \md \sigma, \\
            &\hspace{2cm}-\int_{3/2}^\infty \log z_\theta(\sigma)\, \md \sigma+A_\varepsilon \frac{z_\theta'}{z_\theta}(1+\varepsilon).
        \end{split}
    \end{equation}
\end{theorem}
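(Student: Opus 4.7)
The plan is to proceed via the classical route of Littlewood's lemma combined with Fubini, reducing the problem to bounding $\log|L|$ on vertical half-lines, and then applying a two-region strategy: the polynomial Euler product on the right and the convexity bounds of Lemma \ref{lemma:L} in the critical strip.

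First I would establish the identity
\[
\pi \int_{t_1}^{t_2} S(t)\,\md t = \int_{1/2}^{\infty} \bigl[\log|L(\sigma + it_1)| - \log|L(\sigma + it_2)|\bigr]\,\md\sigma
\]
by swapping the order of integration in the definition \eqref{eq:defS} and using the antiderivative $\int_{t_1}^{t_2} \frac{L'}{L}(\sigma+it)\,\md t = -i\bigl(\log L(\sigma+it_2) - \log L(\sigma+it_1)\bigr)$. Convergence at $\sigma=\infty$ is ensured by the polynomial Euler product, since $\log L(\sigma+it)$ tends to zero absolutely once $\sigma>1+\theta$.

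Next I would split each of the two vertical integrals at $\sigma=1+\varepsilon$. For the tail $[1+\varepsilon,\infty)$ the polynomial Euler product supplies elementary two-sided bounds of the form $-r\log Z_\theta(\sigma)\leq \log|L(\sigma+it)|\leq r\log z_\theta(\sigma)$, and their $\sigma$-integrals are precisely the first two tail integrals appearing in the definition \eqref{eq:cdef} of $c_\theta(\varepsilon)$. For the central strip $[1/2,1+\varepsilon]$ I would invoke Lemma \ref{lemma:L}: a Phragm\'en--Lindel\"of interpolation between the Dirichlet-series bound at $\sigma=1+\varepsilon$ and the reflected bound at $\sigma=-\varepsilon$ (obtained from the functional equation together with Stirling applied to the gamma factors) produces a pointwise estimate of the schematic shape
\[
\log|L(\sigma+it)|\leq r\log Z_\theta(1+\varepsilon)+\tfrac{1+\varepsilon-\sigma}{2}\log|Q(\sigma+it)|+\text{(Stirling error)}.
\]
Integrating the coefficient $\tfrac{1+\varepsilon-\sigma}{2}$ over $\sigma\in[1/2,1+\varepsilon]$ yields exactly $\tfrac{(1/2+\varepsilon)^2}{4}=\tfrac{1}{16}+\tfrac{\varepsilon(1+\varepsilon)}{4}$, and replacing $|Q(\sigma+it_2)|$ by its value at the clean Stirling point $|Q(3+\varepsilon+it_2)|$ delivers the first main term. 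A parallel but easier computation at height $t_1$, where the hypothesis places $t_1+\Im(\mu_j)$ far from the gamma-function poles, yields the $\tfrac{A_\varepsilon-1}{2}\log|Q(1+\varepsilon+it_1)|$ contribution; the precise constant $A_\varepsilon$ arises naturally when one carries out the $\sigma$-integration of the logarithmic Stirling profile over $[1/2,1+\varepsilon]$.

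Finally I would collect everything. The remaining tail pieces, together with the boundary constant $A_\varepsilon\,\tfrac{z_\theta'}{z_\theta}(1+\varepsilon)$ produced by a Taylor/integration-by-parts computation against $\frac{L'}{L}(1+\varepsilon+it_1)$, assemble into $r\,c_\theta(\varepsilon)$. The hard part will be the careful control of the Stirling remainder for each of the $r$ gamma factors $\Gamma\bigl(\tfrac{s+\mu_j}{2}\bigr)$: the hypotheses $|t_2+\Im(\mu_j)|\geq 3/2$ and $(t_1+\Im(\mu_j))^2\geq(2+\varepsilon+\Re(\mu_j))^2+X^2$ are precisely what is needed to keep the argument of each $\Gamma$ at distance $\geq X$ from its poles along the relevant contour, so that the Stirling remainder is uniformly of order $1/X$. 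Summing these $r$ uniform remainders produces the explicit $r(8.3+0.09 A_\varepsilon+(1/2+\varepsilon)(2+\varepsilon)/5)/X$ term, and pinning down the numerical constants at this bookkeeping step is the part most susceptible to error.
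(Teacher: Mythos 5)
Your outline correctly identifies the Littlewood-lemma reduction to two vertical integrals of $\log|L|$, the split at $\sigma=1+\varepsilon$, the role of the Euler product on $[1+\varepsilon,\infty)$, and the use of Lemma \ref{lemma:L} for the upper-bound integral, and you correctly compute $\int_{1/2}^{1+\varepsilon}\tfrac{1+\varepsilon-\sigma}{2}\,\md\sigma=\tfrac{1}{16}+\tfrac{\varepsilon(1+\varepsilon)}{4}$. But there is a genuine gap at the heart of the argument: to bound $\pi\int_{t_1}^{t_2}S(t)\,\md t$ from above, one of the two integrals $\int_{1/2}^\infty\log|L(\sigma+it_k)|\,\md\sigma$ must be bounded from \emph{below}, and Phragm\'en--Lindel\"of interpolation is incapable of producing a lower bound on $\log|L|$. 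You describe the $t_1$ computation as ``a parallel but easier computation'' using the ``logarithmic Stirling profile'' and claim $A_\varepsilon$ arises from integrating a Stirling expansion; neither is correct. In the paper the lower bound comes from a Hadamard-product argument: one writes $F(s)=\Lambda(s)P(s)P(s-1)$ as a product over its zeros, decomposes $\int_{1/2}^{1+\varepsilon}\log\left|\frac{F(\sigma+it_1)}{F(\sigma+1+it_1)}\right|\md\sigma$ as a sum over zeros, and bounds each term by Lemma \ref{lemma:easyIntegral} --- this is exactly where the constant $A_\varepsilon$ originates, as the explicit bound on $\int_0^{1/2+\varepsilon}\log\left|\frac{(x+1+w)(x+1-\overline{w})}{(x+w)(x-\overline{w})}\right|\md x$ for a zero-shift $w$. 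The resulting sum $\sum_\rho\Re\bigl(\tfrac{1}{1+w}+\tfrac{1}{1-\overline{w}}\bigr)$ is then recognized as $\Re\frac{F'}{F}(2+\varepsilon+it_1)$ and split into the $\gamma$-part (handled by Lemma \ref{lemma:gammaQ}, which is where Stirling genuinely enters and where the hypothesis $(t_1+\Im\mu_j)^2\ge(2+\varepsilon+\Re\mu_j)^2+X^2$ is used) and the $L'/L$-part (bounded via the Euler product, producing the $A_\varepsilon\frac{z_\theta'}{z_\theta}(1+\varepsilon)$ term). Without this zero-sum mechanism there is no route to the theorem, so the proposal as written does not close.

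Two smaller remarks. First, your initial identity has $\log|L(\sigma+it_1)|-\log|L(\sigma+it_2)|$, which is in fact what the paper's definition \eqref{eq:defS} yields on direct computation, but the paper quotes the opposite sign (matching Titchmarsh's convention $S(t)=\frac{1}{\pi}\arg L(\tfrac12+it)$, i.e.\ the negative of \eqref{eq:defS}); this does not affect the proof structure, since one integral is bounded above and the other below, but you should note that the hypothesis $(t_1+\Im\mu_j)^2\ge\cdots+X^2$ marks $t_1$ as the height requiring the lower-bound (Hadamard) argument, while the weaker $|t_2+\Im\mu_j|\ge\tfrac32$ suffices for the Phragm\'en--Lindel\"of upper bound. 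Second, you have the roles of $Z_\theta$ and $z_\theta$ reversed: the Euler product gives $z_\theta(\sigma)^r\le|L(\sigma+it)|\le Z_\theta(\sigma)^r$ for $\sigma>1$, so $Z_\theta$ supplies the upper tail and $z_\theta$ the lower.
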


We now state a version of Theorem~\ref{thm:S} specialized for the Selberg class. In this context we put $S(t)$ in \eqref{eq:defS} to be $S_L(t)$.

\begin{theorem}
\label{thm:SelbergL}
Let $L$ be an element of the Selberg class and assume that $\lambda_j<1$ for all $j$. Let
$$
\varepsilon \in \left[0, \min\left\{\frac12,\frac{1}{2}(\max_j\{\lambda_j\}^{-1}-1)\right\}\right]
$$
such that $\varepsilon>\theta_{L}$. Let $X>5$. Assume $t_2\geq t_1\geq 0$, $t_2\geq \varepsilon$ and $(t_1+\Im(\mu_j))^2 \ge \left(\lambda_j(2+\varepsilon)+\Re(\mu_j)\right)^2+X^2$ for all $j=1,...,f$. Suppose also that $L(1)\neq 0$. Then
    \begin{multline}
    \label{eq:SEstimate}
        \pi \int_{t_1}^{t_2} S_L(t)\,\md t \leq \left(\frac{1}{16}+\frac{\varepsilon(1+\varepsilon)}{4}\right) \log |Q_{L}(1+\varepsilon+it_2)|+\frac{A_\varepsilon-1}{2}\log |Q_{L}(1+\varepsilon+it_1)| \\
        +c_{\theta_{L}}(\varepsilon)+\frac{f}{X}\left(0.9+\frac{4A_\varepsilon}{5}\right)+\frac{k_L(2.5+\varepsilon)}{\max\{\varepsilon,t_1\}},
    \end{multline}
    where $A_\varepsilon$ is as in \eqref{def:Aepsilon},
    \begin{equation}
    \label{def:Qs}
        Q_{L}(s):=N^{2}\prod_{j=1}^f \left(\lambda_j s+\mu_j+1\right)^{2\lambda_j},
    \end{equation}
    \begin{equation*}
    Z_{\theta_{L}}(\sigma):=\exp\left(\sum_p\sum_{l=1}^\infty\frac{C_L}{p^{(\sigma-\theta_{L})l}}\right),
\end{equation*}
\begin{equation*}
    z_{\theta_{L}}(\sigma):=\exp\left(-\sum_p\sum_{l=1}^\infty\frac{C_L}{p^{(\sigma-\theta_{L})l}}\right)
\end{equation*}
    and
    \begin{multline*}
        c_{\theta_{L}}(\varepsilon):=\left(\frac{1}{2}+\varepsilon\right)\log Z_{\theta_{L}}(1+\varepsilon)+\int_{1+\varepsilon}^\infty \log Z_{\theta_{L}}(\sigma)\, \md \sigma-\int_{1+\varepsilon}^{2+\varepsilon} \log z_{\theta_{L}}(\sigma)\, \md \sigma \\
        -\int_{3/2}^\infty \log z_{\theta_{L}}(\sigma)\, \md \sigma+A_\varepsilon \frac{z_{\theta_{L}}'}{z_{\theta_{L}}}(1+\varepsilon).
    \end{multline*}
\end{theorem}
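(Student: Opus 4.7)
The plan is to mirror the proof of Theorem \ref{thm:S}, adapting it to the Selberg setting by accommodating general shape parameters $\lambda_j$, a possible pole at $s=1$ of order $k_L$, and the weaker Euler-product axiom (iv). Starting from the definition \eqref{eq:defS} of $S_L(t)$ and swapping the order of integration (deforming around the pole at $s=1$, which $L(1)\neq 0$ guarantees is of pure order $k_L$), one obtains
\begin{equation*}
\pi \int_{t_1}^{t_2} S_L(t)\, \md t = \int_{1/2}^\infty \bigl(\log|L(\sigma+it_1)| - \log|L(\sigma+it_2)|\bigr)\, \md \sigma + E_{\mathrm{pole}},
\end{equation*}
where $E_{\mathrm{pole}}$ is the contribution of the residue along the deformed segment; evaluating it produces the term $k_L(2.5+\varepsilon)/\max\{\varepsilon, t_1\}$ in \eqref{eq:SEstimate}.

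I would then split the $\sigma$-integral at $\sigma = 1+\varepsilon$. On the tail $\sigma \ge 1+\varepsilon$, axiom (iv) yields
\begin{equation*}
\bigl|\log|L(\sigma+it)|\bigr| \le \sum_p \sum_{l=1}^\infty \frac{C_L}{p^{(\sigma-\theta_L)l}} = \log Z_{\theta_L}(\sigma) = -\log z_{\theta_L}(\sigma),
\end{equation*}
whose integrals assemble into three of the four pieces of $c_{\theta_L}(\varepsilon)$. On the central strip $1/2 \le \sigma \le 1+\varepsilon$, I would invoke the Selberg-class analogue of Lemma \ref{lemma:L} described in Remark \ref{rmk:SelbergLs}: a Phragm\'en--Lindel\"of convexity bound across $[-\varepsilon, 1+\varepsilon]$. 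Transferring the Euler bound from $\sigma = 1+\varepsilon$ to $\sigma = -\varepsilon$ via the functional equation introduces a factor of $|Q_L(1+\varepsilon+it)|^{1/2}$ through Stirling applied to the gamma ratio $\chi(s)$, and the resulting linear interpolant integrates as
\begin{equation*}
\int_{1/2}^{1+\varepsilon}\frac{1+\varepsilon-\sigma}{2}\, \md \sigma = \frac{(1/2+\varepsilon)^2}{4} = \frac{1}{16}+\frac{\varepsilon(1+\varepsilon)}{4},
\end{equation*}
which matches the coefficient of $\log|Q_L(1+\varepsilon+it_2)|$ in \eqref{eq:SEstimate}. The companion coefficient $(A_\varepsilon - 1)/2$ and the final summand $A_\varepsilon (z_{\theta_L}'/z_{\theta_L})(1+\varepsilon)$ of $c_{\theta_L}(\varepsilon)$ arise from pairing the boundary value at $\sigma = 1+\varepsilon$ against the Euler-product correction.

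The Stirling estimates on $\Gamma(\lambda_j s + \mu_j)$ must hold uniformly in $j$. The restriction $\varepsilon \le \tfrac{1}{2}(\max_j \lambda_j^{-1} - 1)$, equivalent to $\lambda_j(1+2\varepsilon) \le 1$ for every $j$, ensures that the convexity argument runs within the common domain of validity of all $f$ gamma factors and that the analytic conductor $Q_L$ as defined in \eqref{def:Qs} captures the dominant logarithmic contribution. The separation assumption $(t_1+\Im(\mu_j))^2 \ge (\lambda_j(2+\varepsilon)+\Re(\mu_j))^2 + X^2$ then keeps the arguments of $\Gamma(\lambda_j s + \mu_j)$ at distance at least $X$ from the Gamma poles, so that the Stirling remainders across all $f$ factors combine into the single term $f(0.9 + 4A_\varepsilon/5)/X$.

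The main obstacle is the bookkeeping around the pole at $s = 1$ when $k_L > 0$: it enters the Littlewood-type identity through $E_{\mathrm{pole}}$ and simultaneously affects the convexity step, because the Phragm\'en--Lindel\"of estimate must be applied to $(s-1)^{k_L} L(s)$ rather than to $L$ itself. Propagating this correction through the interpolation and tracking its behaviour as $t_1 \to 0^+$ — which is what forces the $\max\{\varepsilon, t_1\}$ denominator in the final term — is the most delicate step. Beyond that, no new analytic idea beyond the proof of Theorem \ref{thm:S} is required: the argument is a careful transcription with the Selberg-class bounds of Remark \ref{rmk:SelbergLs} substituted for the Artin version of Lemma \ref{lemma:L}.
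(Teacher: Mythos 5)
Your high-level strategy — transcribe the proof of Theorem \ref{thm:S} with the substitutions listed in Table \ref{table:SelbergChanges}, split the $\sigma$-integral at $1+\varepsilon$, use Remark \ref{rmk:SelbergLs} for the convexity piece, Remark \ref{rmk:gammaQdifference} for the Stirling remainders, and the Euler-product axiom for the tails — is exactly what the paper does, and your accounting of the coefficients of $\log|Q_L|$ and the restriction $\lambda_j(1+2\varepsilon)\le 1$ is accurate. However, there is a genuine conceptual error in how you think the $k_L$ term arises, along with a couple of smaller problems.

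The starting identity is the clean Littlewood/Backlund equality
\begin{equation*}
\pi \int_{t_1}^{t_2} S_L(t)\,\md t=\int_{1/2}^\infty \log |L(\sigma+it_2)|\,\md \sigma - \int_{1/2}^\infty \log |L(\sigma+it_1)|\,\md \sigma,
\end{equation*}
with no residue correction: the pole at $s=1$ lies on the real axis, strictly below the contour (or on its boundary when $t_1=0$, which the limit in the definition of $S_L$ accommodates), so there is no ``deformed segment'' producing an $E_{\mathrm{pole}}$. (You also have the two integrals in the wrong order, but that is secondary.) The $k_L(2.5+\varepsilon)/\max\{\varepsilon,t_1\}$ term is therefore \emph{not} a residue contribution. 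It is assembled from two sources threaded through the convexity and lower-bound arguments: (a) in the upper bound at $t_2$, the factor $\left|\frac{s-2}{s-1}\right|^{k_L}$ from Remark \ref{rmk:SelbergLs} contributes, after integrating its logarithm over $\sigma\in[1/2,1+\varepsilon]$ via the mean value theorem, a term of size $\frac{k_L(1/2+\varepsilon)\varepsilon}{t_2^2}$; and (b) in the lower bound at $t_1$, one must work with the entire function $s^{k_L}(s-1)^{k_L}\mathcal{L}(s)$, for which the paper invokes the Hadamard product $s^{k_L}(s-1)^{k_L}\mathcal{L}(s)=e^{A_Ls+B_L}\prod_\rho(1-s/\rho)e^{s/\rho}$ from \cite[Lemma 3.2]{PS2024}. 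This step, which you omit entirely, is the Selberg-class replacement for the factorization of $\Lambda(s)P(s)P(s-1)$ in Booker's argument; it is what feeds Lemma \ref{lemma:easyIntegral} and produces the $A_\varepsilon$-weighted terms, and its $k_L$-dependence contributes roughly $\frac{2k_L}{\max\{1/2,t_1\}}$. Combining (a) and (b), and using $t_2\ge\varepsilon$, $t_2\ge t_1$, gives the $\max\{\varepsilon,t_1\}$ denominator.

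So while the broad outline matches the paper, the proposal as written would not deliver the $k_L$ term by the route you describe, and it is missing the Hadamard-product input that is the one genuinely new ingredient needed beyond Theorem \ref{thm:S}.
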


\begin{theorem}
\label{thm:SelbergPolynomial}
If we have otherwise the same conditions as in Theorem \ref{thm:SelbergL} but  additionally assume that $L$ has a polynomial Euler product representation of order $l$, then $\theta_{L}=0$, and we can replace the term $c_{\theta_{L}}(\varepsilon)$ in \eqref{eq:SEstimate} with $lc_0(\varepsilon)$. Here $c_{0}(\varepsilon)$ is given as in \eqref{eq:cdef}.
\end{theorem}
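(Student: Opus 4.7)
The plan is to sharpen the constant $c_{\theta_L}(\varepsilon)$ in Theorem~\ref{thm:SelbergL} by exploiting the polynomial Euler product to replace the crude bounds $Z_{\theta_L}, z_{\theta_L}$ by the $\theta=0$ specializations of the Theorem~\ref{thm:S} quantities, each raised to the power $l$.

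First I would establish $\theta_L=0$. From a polynomial Euler product of order $l$, the Dirichlet coefficients of $\log L$ are $b(p^k)=\sum_{j=1}^{l}\alpha_j(p)^k/k$. A theorem of Kaczorowski--Perelli (or equivalently, the combination of the Ramanujan hypothesis with the polynomial product structure) gives $|\alpha_j(p)|\le 1$ for all primes $p$ and all $j$, whence $|b(p^k)|\le l$; so axiom (iv) is satisfied with $\theta_L=0$ and $C_L=l$. As an immediate consequence, for $\Re(s)=\sigma>1$ one has the sharper bounds
\[
|L(s)|\le \zeta(\sigma)^l, \qquad |L(s)|^{-1}\le \bigl(\zeta(\sigma)/\zeta(2\sigma)\bigr)^l,
\]
obtained respectively from $\log L(s)=\sum_{p,j,k}\alpha_j(p)^k/(kp^{ks})$ by taking termwise absolute values, and from the product expansion $1/L(s)=\prod_{p,j}(1-\alpha_j(p)/p^s)$. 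These are precisely $Z_0(\sigma)^l$ and $z_0(\sigma)^{-l}$, where $Z_0(\sigma)=\zeta(\sigma)$ and $z_0(\sigma)=\zeta(2\sigma)/\zeta(\sigma)$ are the $\theta=0$ instances of the quantities in~(\ref{eq:cdef}); they are strictly stronger than the Selberg-class bounds $|L(s)|\le Z_{\theta_L}(\sigma)$ and $|L(s)|\ge z_{\theta_L}(\sigma)$ used in Theorem~\ref{thm:SelbergL} because the Taylor expansion of $\log(1-x)$ supplies the extra factor $1/k$ in each summand.

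Then I would retrace the proof of Theorem~\ref{thm:SelbergL}, substituting these sharper bounds for the Selberg-class ones wherever they enter. The constant $c_{\theta_L}(\varepsilon)$ appears in that proof as a sum of integrals of $\log Z_{\theta_L}$ and $\log z_{\theta_L}$ over prescribed intervals plus the boundary term $A_\varepsilon\,(z_{\theta_L}'/z_{\theta_L})(1+\varepsilon)$. Under the replacement $Z_{\theta_L}(\sigma)\mapsto Z_0(\sigma)^l$ and $z_{\theta_L}(\sigma)\mapsto z_0(\sigma)^l$, each logarithm acquires a factor of $l$ and the logarithmic derivative $z_{\theta_L}'/z_{\theta_L}$ becomes $l\cdot z_0'/z_0$. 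Hence every summand in the definition of $c_{\theta_L}(\varepsilon)$ is multiplied by $l$, giving precisely $l\,c_0(\varepsilon)$ with $c_0$ as in~(\ref{eq:cdef}). All other contributions to the bound~(\ref{eq:SEstimate})---namely those involving $Q_L$, $f$, and $k_L$---are unaffected, since they depend only on the archimedean data and on the pole order at $s=1$.

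The main obstacle is the very first step: the assertion $\theta_L=0$ rests on the nontrivial bound $|\alpha_j(p)|\le 1$, a structural consequence of the Selberg class axioms that must be imported from the literature. Once this is granted, the remaining substitutions are mechanical line-by-line modifications of the proof of Theorem~\ref{thm:SelbergL} and introduce no new estimates.
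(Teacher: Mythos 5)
Your proof is correct and takes essentially the same approach as the paper: the paper's proof of Theorem~\ref{thm:SelbergPolynomial} simply says to replace $Z_{\theta_L}(\sigma)$ and $z_{\theta_L}(\sigma)$ by $Z_0(\sigma)^l$ and $z_0(\sigma)^l$ in the argument of Theorem~\ref{thm:SelbergL} and to note that $\theta_L=0$. You spell out the details the paper leaves implicit (the structural bound $|\alpha_j(p)|\le 1$ forcing $\theta_L=0$, and the resulting Euler-product estimates $z_0(\sigma)^l\le|L(\sigma+it)|\le Z_0(\sigma)^l$), but the route is the same.
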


\subsection{Examples of the results and improvements to earlier estimates}
\label{sec:examples}

Below we give some examples illustrating how our bounds on $L$-functions improve some earlier results. The proofs are short and will be given in Section \ref{sec: proofs of examples}.

\begin{example}
\label{ex:Artin}
Let $L$ be an entire Artin $L$-function. Then
\begin{equation}
\label{eq:LArtin}
        \left|L(s)\right| \leq \zeta(1.49)^rN^{\frac{1.49-\sigma}{2}}\left(\frac{|3+s|}{2\pi}\right)^{\frac{(1.49-\sigma)r}{2}}
    \end{equation}
     for $\Re(s) \in[0.5,1.49]$.
\end{example}
Since $N \geq 3$ \cite[Theorem 3.2]{PM2011}, the right-hand side of \eqref{eq:LArtin} improves \cite[Lemma 5]{GL2022} for all $\Re(s) \in[0.5,1.49]$ and $|t| \geq 140$. Moreover, for all $\varepsilon \in(0, 1/2)$ and $|t|$ large enough, Lemma \ref{lemma:L} improves Lemma 5 in \cite{GL2022}. Our improvements to \cite[Lemma 5]{GL2022} improve the constant $C_2$ in \cite[Lemma 7]{GL2022} when $\sigma<3/2$, which in turn improves \cite[Lemma 9]{GL2022} where Lemma 7 is used to bound $\frac{L'(1/2+\delta+it,\chi)}{L(1/2+\delta+it,\chi)}$. It also affects the constant $C_6$ in \cite[Lemma 10]{GL2022}, which is defined in terms of $C_2$. 

\begin{example}
\label{ex:newform}
 Let $f \in L^2(\Gamma_1(N)\setminus \mathbb{H})$ be a cuspidal Maass newform and a Hecke eigenform of weight $0$ and level $N$, and $L(s)$ an $L$-function associated with $f$. Then, for $\sigma \in [-0.4, 1.4]$
   \begin{equation}
    \label{eq:Maass}
        |L(s)| < \zeta\left(1.4+\frac{7}{64}\right)\zeta\left(1.4-\frac{7}{64}\right) \cdot
        \begin{cases}
        \left(\frac{5N^{1/2}}{2\pi}(|t|+D_{s,f})\right)^{1.4-\sigma} &\text{if } |t|<5 \\
        \left(\frac{3N^{1/2}}{4\pi}(|t|+D_{s,f})\right)^{1.4-\sigma} &\text{if } |t|\geq 5,
        \end{cases}
    \end{equation}
    where
    $$
    Q(s)=N\frac{s+\varepsilon'+ir'}{2\pi}\cdot \frac{s+\varepsilon'-ir'}{2\pi},
    $$ 
   $\varepsilon' \in \{0,1\}$ is the parity of the cusp form, $1/4+r'^2$ is the Laplacian eigenvalue, $D_{s,f}:=3\sigma-1+\varepsilon'+|r'|+\frac{(2\sigma-1)^2}{1-\sigma+\varepsilon'}$ and $\zeta\left(1.4+\frac{7}{64}\right)\zeta\left(1.4-\frac{7}{64}\right)\approx 10.4$.
\end{example}

Since $|Q(2+s)|^{\frac{1+\varepsilon-\sigma}{2}} \asymp |t|^{1+\varepsilon-\sigma}$ as $|t|\to \infty$ for fixed $\sigma$ and $\varepsilon$, Example \ref{ex:newform} improves \cite[Corollary 4.3]{BT2018} for all fixed $\sigma\in [1/2,1)$, $\sigma >\varepsilon$, and $|t|$ large enough. The estimate \eqref{eq:Maass} improves \cite[Corollary 4.3]{BT2018} for all $\sigma \in [1/2,1)$ and $|t|\geq 5$. 

Similarly, Theorem \ref{thm:S} improves \cite[Theorem 7.1]{BT2018} for all fixed $\varepsilon \in (7/64, 1/2)$, and $t_1, t_2$ large enough. Note that due to Remark \ref{rmk:mistake}, the term $\frac{2}{\sqrt{2}(X-5)}$ in \cite[Theorem 7.1]{BT2018} should be replaced by $\frac{1.6}{X-5}$. 

In general, we obtain the following improvements.

\begin{itemize}
    \item Let $\varepsilon \in(0,1/2)$ and $\sigma \in [-\varepsilon, 1+\varepsilon]$ be fixed. The estimate for $|L(s)|$ by Lemma \ref{lemma:L} is $\asymp_\varepsilon |t|^{\frac{r(1+\varepsilon-\sigma)}{2}}$ as $|t| \to \infty$ whereas by Stirling's formula the estimate in \cite[Lemma 4.1]{Booker2006} is $\asymp |t|^{\frac{r(3-2\sigma)}{4}}$ as $|t| \to \infty$. Hence, Lemma \ref{lemma:L} improves \cite[Lemma 4.1]{Booker2006} for all $\sigma$ and for all $|t|$ large enough.

    Due to similar reasons, Theorem \ref{thm:S} improves \cite[Theorem 4.6]{Booker2006} for all fixed $\varepsilon \in (\theta,1/2)$, and $t_1, t_2$ large enough. 
   
    \item Let $L, \varepsilon$ and $\lambda_j$, $j=1,\ldots, f$, satisfy the same hypothesis as in Remark \ref{rmk:SelbergLs}. The exponent of the term $|t|$ in $Q_{L}(s)$ is $d_L$ where $d_L:=2\sum_{j=1}^f \lambda_j$ denotes the degree of the $L$-function. Thus for all $L$ that satisfy the conditions in Remark \ref{rmk:SelbergLs}, and for all $\Re(s) \in [-\varepsilon, 1+\varepsilon]$ and $|t|$ large enough, Remark \ref{rmk:SelbergLs} improves the estimate for $|L(s)|$ in \cite[Theorem 4.2]{P2019}. The lower bound $t_0$ from which Remark \ref{rmk:SelbergLs} provides a better estimate than \cite[Theorem 4.2]{P2019} depends on exact parameters $f, \lambda_j, \mu_j$, $j=1,\ldots, f$.
\end{itemize}

\section{Estimating $L(s)$ when $-\varepsilon \leq \Re(s)\leq 1+\varepsilon$}

Our goal in this section is to obtain some explicit estimates on the $L$-functions described in Section \ref{sec:Artin} in the critical strip required for applying Turing's method. As seen in earlier sections, these bounds are also of independent interests. 

First, we derive an estimate for gamma functions that has an important role in our estimates for the $L$-functions.

\begin{lemma}
    \label{lemma:Gamma}
    For $\varepsilon \in [-1/2,1/2]$,
    \[
    \bigg|\frac{\Gamma(\frac{\mu_j+1+\varepsilon+it}{2})}{\Gamma(\frac{\mu_j-\varepsilon+it}{2})}\bigg|\leq \bigg|\frac{\mu_j+2-\varepsilon+it}{2}\bigg|^{1/2+\varepsilon}. 
    \]
\end{lemma}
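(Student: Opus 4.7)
My plan is to simplify the inequality via a change of variables, dispose of the two endpoint values of $\varepsilon$ by hand, and interpolate between them. Let $z:=(\mu_j-\varepsilon+it)/2$ and $\alpha:=\tfrac{1}{2}+\varepsilon\in[0,1]$, so that the arguments of the two Gamma factors on the left become $z+\alpha$ and $z$, while the right-hand base is $z+1$. The claim is then equivalent to
\[
\left|\frac{\Gamma(z+\alpha)}{\Gamma(z)}\right|\leq |z+1|^{\alpha}\qquad(\alpha\in[0,1]).
\]
Under the standing hypothesis $\Re(\mu_j)\geq -\theta$ with $\theta<1/2$ from Section~\ref{sec:Artin}, one has $\Re(z)\geq -1/2$, which yields the elementary comparisons $|z|\leq|z+1|$ and, more generally, $|z+\alpha|\leq|z+1|$ for every $\alpha\in[0,1]$ by a direct computation on the squared moduli.

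First I would check the endpoints. At $\alpha=0$ both sides equal $1$. At $\alpha=1$, the functional equation $\Gamma(z+1)=z\Gamma(z)$ reduces the inequality to $|z|\leq|z+1|$, already secured above. For intermediate $\alpha$, my plan is to monitor the real-valued auxiliary function $g(\alpha):=\log|\Gamma(z+\alpha)|-\alpha\log|z+1|$, whose derivative equals $g'(\alpha)=\Re\psi(z+\alpha)-\log|z+1|$. Since $g(0)=\log|\Gamma(z)|$ and the goal $g(\alpha)\leq g(0)$ follows once $g'(\alpha)\leq 0$, it suffices to show $\Re\psi(z+\alpha)\leq\log|z+1|$. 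I would derive this from Binet's second formula
\[
\psi(w)=\log w-\tfrac{1}{2w}-2\int_0^{\infty}\frac{s\,\md s}{(s^2+w^2)(e^{2\pi s}-1)},
\]
which, after a sign analysis, gives $\Re\psi(w)\leq\log|w|$ for $\Re(w)>0$; combining with $|z+\alpha|\leq|z+1|$ upgrades $\log|w|=\log|z+\alpha|$ to $\log|z+1|$. A cleaner alternative is the Hadamard three-lines theorem applied to $F(\alpha):=\Gamma(z+\alpha)/[\Gamma(z)(z+1)^\alpha]$ on the strip $0\leq\Re(\alpha)\leq 1$, using the two endpoint computations on the boundary lines and Stirling's asymptotic to certify admissible (in fact, exponentially decaying) growth in $|\Im\alpha|$.

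The main obstacle is the subrange $\Re(z)\in[-1/2,0)$. In the digamma approach, Binet's formula demands $\Re(w)>0$, so on $\alpha\in[0,-\Re(z))$ one must first shift via $\psi(w)=\psi(w+1)-1/w$ and track the correction, or alternatively exploit the exact identity $\int_0^\alpha\Re\psi(z+u)\,\md u=\log|\Gamma(z+\alpha)/\Gamma(z)|$ to avoid pointwise bounds altogether. In the three-lines approach, $F$ has a pole at $\alpha=-z$ inside the open strip, which can be handled by first establishing the inequality for $\Re(z)\geq 0$ and then extending to $\Re(z)\in[-1/2,0)$ via the factorization $\Gamma(z+\alpha)/\Gamma(z)=(z/(z+\alpha))\cdot\Gamma(z+1+\alpha)/\Gamma(z+1)$. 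Either way, the extra work is routine complex-analytic bookkeeping, and the sharp input throughout is the endpoint hypothesis $\Re(\mu_j)\geq-\theta\geq-1/2$ already built into the paper's setup.
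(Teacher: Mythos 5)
Your reduction to $|\Gamma(z+\alpha)/\Gamma(z)|\leq|z+1|^{\alpha}$ with $z=(\mu_j-\varepsilon+it)/2$, $\alpha=\tfrac12+\varepsilon$ is a valid rewriting, but the strategy of then \emph{fixing} $z$ and letting $\alpha$ range over $[0,1]$ silently strengthens the claim to something false, because in the lemma $z$ and $\alpha$ are coupled through $\varepsilon$. The hypothesis $\Re(\mu_j)\geq-\theta>-1/2$ translates not to ``$\Re(z)\geq-1/2$'' but to $\Re(z)+\alpha/2=(\Re(\mu_j)+1/2)/2>0$; in particular at the anchor $\alpha=0$ one actually has $\Re(z)>0$, and small $\alpha$ paired with $\Re(z)$ near $-1/2$ never occurs. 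Once you decouple, the argument breaks: the bound $\Re\psi(w)\leq\log|w|$ for $\Re(w)>0$ is itself false. From the Stirling series $\psi(w)=\log w-\tfrac{1}{2w}-\tfrac{1}{12w^2}+O(w^{-4})$ one gets, for $w=u+iT$ with $T$ large, $\Re\psi(w)-\log|w|\sim(1-6u)/(12T^2)>0$ whenever $u<1/6$; the sign analysis of Binet's integral does not go through because $\Re\frac{1}{s^2+w^2}$ changes sign when $|\Im w|>\Re w$. More directly, $g'(0)=\Re\psi(z)-\log|z+1|\sim\bigl(-\tfrac{3}{2}\Re(z)-\tfrac{5}{12}\bigr)/|\Im z|^2$, which is strictly positive for $\Re(z)<-5/18$ and $|\Im z|$ large. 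Since $g(0)=\log|\Gamma(z)|$, this forces $g(\alpha)>g(0)$ for small $\alpha>0$, i.e.\ $|\Gamma(z+\alpha)/\Gamma(z)|>|z+1|^{\alpha}$, so the decoupled claim you set out to prove is not merely hard to verify on $\Re(z)\in[-1/2,0)$, it is false there (and on part of $\Re(z)\in[0,1/6)$ as well).

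The three-lines alternative has the same defect: interpolating in $\alpha$ with $z$ held fixed moves only one of the two Gamma arguments, whereas the correct Phragm\'{e}n--Lindel\"{o}f interpolation is in $\varepsilon$ (equivalently $\Re(s)$), which shifts both arguments simultaneously and keeps the imaginary part free. Moreover, with $z$ fixed and complex one does not get $\sup_y|F(iy)|\leq 1$ on the boundary line, so the three-lines theorem does not deliver the endpoint bound you need. The paper's proof does the interpolation correctly: it applies Rademacher's Theorem~2 to $f(s)=\Gamma\bigl((\Re(\mu_j)+1-s)/2\bigr)/\Gamma\bigl((\Re(\mu_j)+s)/2\bigr)$ on the strip $-1/2\leq\Re(s)\leq 1/2$ with $s=-\varepsilon-it-i\Im(\mu_j)$, bounding $|f|$ along both vertical edges (for all $t$), so that the convexity exponent is $\Re(s)=-\varepsilon$ itself. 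To salvage your approach you would have to interpolate along the line $\varepsilon\mapsto\bigl((\mu_j-\varepsilon+it)/2,\ \tfrac12+\varepsilon\bigr)$ rather than in $\alpha$ alone, which is in substance Rademacher's theorem.
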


\begin{proof}
    We apply \cite[Theorem 2]{Rademacher1959}, a variation of the Phragm\'{e}n\textendash Lindel\"{o}f theorem proved by Rademacher, with $f(s)=\frac{\Gamma((\Re(\mu_j)+1-s)/2)}{\Gamma((\Re(\mu_j)+s)/2)}$ where $s=-\varepsilon-it-i\Im(\mu_j)$. Since
    \[\left|\Gamma\left(\frac{\mu_j-\varepsilon+it}{2}\right)\right|=\left|\Gamma\left(\frac{\overline{\mu_j}-\varepsilon-it}{2}\right)\right|,
    \]
    it suffices to seek a bound on $|f(s)|$. Take $a=-1/2, b=1/2$ and $Q=2+\Re(\mu_j)$, then we would have $A=1/2$, $B=1$, $\alpha=1$ and $\beta=0$. In particular,
    \[
    |f(a+it)|=\left|\frac{\Gamma(\frac{\Re(\mu_j)+3/2-it}{2})}{\Gamma(\frac{\Re(\mu_j)-1/2-it}{2})}\right| = \left|\frac{\Re(\mu_j)-1/2+it}{2}\right|\leq \left|\frac{\Re(\mu_j)+3/2+it}{2}\right|=\left|\frac{Q+a+it}{2}\right|
    \]
    and $|f(b+it)|=1$. It thus follows from \cite[Theorem 2]{Rademacher1959} that
    \[
    |f(s)|\leq \left|\frac{Q+s}{2}\right|^{1/2+\varepsilon},
    \]
    which yields the claim.
\end{proof}

\begin{remark}
    If $\Re(\mu_j)$ is very large and $\varepsilon<1/2$, one may want to use \cite[Theorem 2a]{Rademacher1959} instead. In this case we work with
    $f(s)=\frac{\Gamma((\Re(\mu_j)+s)/2)}{\Gamma((\Re(\mu_j)+1-s)/2)}$ where  $s=1+\varepsilon+it+i\Im(\mu_j)$, $a=1/2$, $b=3/2$, $A=1$, $B=1/2$, $\alpha=0$, $\beta=1$. We have $|f(a+it)|=1$ and
    $$
    |f(b+it)| \leq \left|\frac{\overline{\mu_j}-1/2-it}{2}\right| \leq \left|\frac{\mu_j+3/2+it}{2}\right|.
    $$
    Thus
    \[
    \bigg|\frac{\Gamma(\frac{\mu_j+1+\varepsilon+it}{2})}{\Gamma(\frac{\mu_j-\varepsilon+it}{2})}\bigg|\leq \left(\frac{\Re(\mu_j)+3/2}{\Re(\mu_j)+1/2}\right)^2\bigg|\frac{\mu_j+1+\varepsilon+it}{2}\bigg|^{1/2+\varepsilon}. 
\]
\end{remark}

\begin{remark}
\label{rmk:GammaSelberg}
     Assume that $L$ is in the Selberg class. Let $\lambda_j$ be as in the definition of the Selberg class and suppose that $\lambda_j <1$. Let also $\varepsilon \in [-1/2, \frac{1}{2}(\lambda_j^{-1}-1)]$. We set $\Re(\mu_j)+1$ instead of $Q$, $i\lambda_j t+i\Im(\mu_j)$ instead of $it$, $f(s):=\frac{\Gamma(\Re(\mu_j)+\lambda_j-s)}{\Gamma(\Re(\mu_j)+s))}$, $a:=(\lambda_j-1)/2$, $b:=\lambda_j/2$, $A=B=\alpha=1$ and $\beta=0$ in \cite[Theorem 2]{Rademacher1959}. Then  
    \[
\bigg|\frac{\Gamma(\mu_j+\lambda_j(1+\varepsilon)+i\lambda_j t)}{\Gamma(\mu_j-\lambda_j\varepsilon+i\lambda_j t)}\bigg|\leq \bigg|\mu_j+1+(-\varepsilon+it)\lambda_j\bigg|^{2\lambda_j(1/2+\varepsilon)}, 
\]
Notice that if $\lambda_j \leq 1/2$, then the exponent is $\le 1/2+\varepsilon$.
\end{remark}

The following estimate on $L(s)$ sharpens \cite[Lemma 4.1]{Booker2006}. In particular, when $\sigma=1/2$ this gives the so-called convexity bound on the critical line.

\begin{lemma}
\label{lemma:L}
    Let $\varepsilon\in (0,1/2]$ and define $b_\sigma:=\sup_{\Re(s)=\sigma}|L(s)|$. Then for $s$ in the strip $\Re(s)\in [-\varepsilon,1+\varepsilon]$, 
    \[
    |L(s)|\leq b_{1+\varepsilon}|Q(2+s)|^{\frac{1+\varepsilon-\sigma}{2}}\bigg|\frac{P(s-2)}{P(s-1)}\bigg|.
    \]
\end{lemma}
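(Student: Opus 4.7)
The plan is to apply Rademacher's Phragm\'en--Lindel\"of theorem (the same tool used to prove Lemma \ref{lemma:Gamma}) to a suitably modified version of $L(s)$ on the closed strip $\Re(s)\in[-\varepsilon,1+\varepsilon]$. Since $L$ may have poles at the points $1+\tau_k$ lying on the $1$-line, I first set
\[
F(s):=L(s)\frac{P(s-1)}{P(s-2)},
\]
which is holomorphic throughout the strip: the factor $P(s-1)=\prod_k(s-1-\tau_k)$ kills the poles of $L$ inside the strip, while $P(s-2)$ vanishes only at the points $2+\tau_k$, which lie strictly to the right of it. The target inequality rearranges exactly to $|F(s)|\leq b_{1+\varepsilon}|Q(2+s)|^{(1+\varepsilon-\sigma)/2}$, so it suffices to bound $|F|$ on the two vertical edges and interpolate.

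On the right edge $\sigma=1+\varepsilon$ I would use $|L(1+\varepsilon+it)|\leq b_{1+\varepsilon}$ from the definition of $b_{1+\varepsilon}$, and check that $|P(s-1)/P(s-2)|\leq 1$ there: writing $\tau_k=i\,\Im\tau_k$ (the $\tau_k$ are purely imaginary, since the poles lie on the $1$-line), we have $|s-1-\tau_k|^2=\varepsilon^2+(t-\Im\tau_k)^2\leq(1-\varepsilon)^2+(t-\Im\tau_k)^2=|s-2-\tau_k|^2$ because $\varepsilon\leq 1/2$. On the left edge $\sigma=-\varepsilon$ I invoke the functional equation $L(s)=\chi(s)\overline{L(1-\bar s)}$, giving $|L(-\varepsilon+it)|\leq|\chi(-\varepsilon+it)|\,b_{1+\varepsilon}$, and estimate $|\chi|$ by unwinding the definition of $\gamma(s)$: the $N$- and $\pi$-factors contribute $N^{1/2+\varepsilon}\pi^{-r(1/2+\varepsilon)}$, while each gamma ratio is bounded using Lemma \ref{lemma:Gamma} (after passing to the conjugate) by $|(\mu_j+2-\varepsilon+it)/2|^{1/2+\varepsilon}$. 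Multiplying these pieces reassembles $|Q(2-\varepsilon+it)|^{1/2+\varepsilon}$, and the same computation as on the right edge shows $|P(s-1)/P(s-2)|\leq 1$ on the left as well, so $|F(-\varepsilon+it)|\leq b_{1+\varepsilon}|Q(2+s)|^{1/2+\varepsilon}$.

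Finally I apply \cite[Theorem 2]{Rademacher1959} to $F$ on the strip with $a=-\varepsilon$, $b=1+\varepsilon$, taking the $|Q+s|$-type factor to be $|Q(2+s)|$ and the two edge exponents to be $1/2+\varepsilon$ and $0$ respectively; the linear interpolation of exponents produces $(1+\varepsilon-\sigma)/(1+2\varepsilon)\cdot(1/2+\varepsilon)=(1+\varepsilon-\sigma)/2$, matching the target. The finite-order growth hypothesis needed by Phragm\'en--Lindel\"of follows from the entirety and finite order of $\Lambda(s)P(s)P(s-1)$ together with Stirling on $\gamma(s)^{-1}$.

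I expect the only delicate point to be the algebraic matching between $|\chi(-\varepsilon+it)|$ and $|Q(2+s)|^{1/2+\varepsilon}$: the $N$-power, the $(2\pi)^{-r}$-factor from $Q$, and the exponent $1/2+\varepsilon$ have to line up exactly, and one has to be careful that Lemma \ref{lemma:Gamma} produces $|\mu_j+2-\varepsilon+it|/2$ rather than $|\mu_j+1+\varepsilon+it|/2$, which is precisely the factor appearing in $Q(2+s)$ at $s=-\varepsilon+it$. Everything else (the $P$-ratio inequalities and the Rademacher interpolation) is routine once the normalization is correct.
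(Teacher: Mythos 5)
Your proposal is correct and follows essentially the same route as the paper's proof: the functional equation plus Lemma \ref{lemma:Gamma} to bound $|L|$ (or $|F|$) on the left edge, the trivial bound $b_{1+\varepsilon}$ on the right edge, the observation that $|P(s-1)|\le |P(s-2)|$ in the strip (the paper notes this holds whenever $\Re(s)\le 3/2$; your edge-by-edge check via the purely imaginary $\tau_k$ is equivalent), and Rademacher's Phragm\'en--Lindel\"of interpolation to finish. The one point worth flagging is that Rademacher's Theorem 2 as stated handles a single linear factor $|Q+s|$, while here the conductor $|Q(2+s)|$ is a product of $r$ such factors times $N$; you implicitly use the product version, and the paper acknowledges this explicitly by noting that display (2.2) in Rademacher's Theorem 1 is to be replaced by $C\prod_j(Q_j+a+it)$.
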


\begin{proof}
First consider the case where $L(s)$ is entire. Since $|L(s)|=|\chi(s)||L(1-\overline{s})|$, we can estimate
\begin{align*}
    |L(-\varepsilon+it)|=&|L(1+\varepsilon+it)| |\chi(-\varepsilon+it)|\\
    \leq & b_{1+\varepsilon}\bigg|\frac{\gamma(1+\varepsilon+it)}{\gamma(-\varepsilon+it)}\bigg|\\
    = & b_{1+\varepsilon} N^{1/2+\varepsilon}\pi^{-r/2-r\varepsilon} \prod_{j=1}^r \bigg|\frac{\Gamma(\frac{\mu_j+1+\varepsilon+it}{2})}{\Gamma(\frac{\mu_j-\varepsilon+it}{2})}\bigg|\\
    \leq& b_{1+\varepsilon} N^{1/2+\varepsilon}\pi^{-r/2-r\varepsilon}\prod_{j=1}^r \bigg|\frac{\mu_j+2-\varepsilon+it}{2}\bigg|^{1/2+\varepsilon}\\
    =& b_{1+\varepsilon} |Q(2-\varepsilon+it)|^{1/2+\varepsilon},
\end{align*}
where we invoked Lemma \ref{lemma:Gamma} in the penultimate line. Note that we may slightly modify \cite[Theorem 1]{Rademacher1959}, which is used in the proof of \cite[Theorem 2]{Rademacher1959}, by replacing the terms $Q+a+it$ and $Q+b+it$ in display equation (2.2) with $C\prod_j^r (Q_j+a+it)$ for some constant $C$ (resp. for $b$). Combining these with the bound $|L(1+\varepsilon+it)|\leq b_{1+\varepsilon}$ and applying \cite[Theorem 2]{Rademacher1959} gives the desired estimate.

Next suppose that $L(s)$ has poles. Note that $|s-1|\leq |s-2|$ throughout the strip $\Re(s)\le 3/2$, and hence $|P(s-1)|\leq |P(s-2)|$. The argument above works after we replace $L(s)$ by $L(s)P(s-1)/P(s-2)$, which is holomorphic in this strip. This completes the proof.
\end{proof}

\begin{remark}
\label{rmk:SelbergLs}
     Let $L$ and $\lambda_j$ be as in Theorem \ref{thm:SelbergL}. Let $\varepsilon \in (0, \min\left\{\frac12,\frac{1}{2}(\max_j\{\lambda_j\}^{-1}-1)\right\}]$ and $Q_{L}(s)$ be as in \eqref{def:Qs}.
    Then for $\Re(s) \in [-\varepsilon, 1+\varepsilon]$ we have
     \[
    |L(s)|\leq b_{1+\varepsilon}|Q_{L}(s)|^{\frac{1+\varepsilon-\sigma}{2}}\bigg|\frac{s-2}{s-1}\bigg|^{k_L}.
    \]
\end{remark}

\section{Estimating $\int S(t) \, dt$}
In this section, we provide the proofs of our main theorems described in Section \ref{sec:results}. First, we record two lemmas that will be useful for bounding the integral of the argument of $L(s)$ in those theorems.

\begin{lemma}
\label{lemma:gammaQ}
    \cite[Lemma 4.3(i), modified version]{Booker2006}
    Assume that $\varepsilon\in [0, 1/2]$, $\sigma \in [1/2, 2+\varepsilon]$ and that for all $j=1,...,r$, $(t+\Im(\mu_j))^2 \ge \left(2+\varepsilon+\Re(\mu_j)\right)^2+X^2$ for some $X>5$. Then
    \begin{equation*}
        -r\left(\frac{1}{2\sqrt{2}X}+\frac{4/\pi^2+1/4}{X^2}\right) \le \Re\frac{\gamma'}{\gamma}(\sigma+it)-\frac{1}{2}\log{\left|Q\left(1+\varepsilon+it\right)\right|} \leq \frac{4r}{\pi^2 X^2}.
    \end{equation*}
\end{lemma}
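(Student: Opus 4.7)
The plan is to compare $\gamma'/\gamma$ to its Stirling approximation term by term and then absorb the shift from $\sigma$ to $1+\varepsilon$ into the error.

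First, I would logarithmically differentiate the definition of $\gamma$:
\[
\frac{\gamma'}{\gamma}(\sigma+it) = \frac{\log N}{2} - \frac{r\log\pi}{2} + \frac{1}{2}\sum_{j=1}^{r}\psi(w_j),\qquad w_j := \frac{\sigma+it+\mu_j}{2},
\]
and expand the target using the definition \eqref{eq:defQ}:
\[
\tfrac{1}{2}\log|Q(1+\varepsilon+it)| = \frac{\log N}{2} - \frac{r\log(2\pi)}{2} + \frac{1}{2}\sum_{j=1}^{r}\log|1+\varepsilon+it+\mu_j|.
\]
Writing $\log|1+\varepsilon+it+\mu_j| = \log 2 + \log|w_j + \delta/2|$ with $\delta := 1+\varepsilon-\sigma$, the $\log N$ terms cancel and $r(-\log\pi + \log(2\pi) - \log 2) = 0$, reducing the lemma to the per-index estimate
\[
-\frac{1}{\sqrt{2}\,X} - \frac{8/\pi^2 + 1/2}{X^2} \;\le\; E_j \;\le\; \frac{8}{\pi^2 X^2},\qquad E_j := \Re\psi(w_j) - \log\bigl|w_j + \tfrac{\delta}{2}\bigr|,
\]
which, after multiplying by $1/2$ and summing over $j=1,\dots,r$, delivers the claim.

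Next, setting $u_j := (\sigma+\Re\mu_j)/2 \geq 0$ and $v_j := (t+\Im\mu_j)/2$, the hypothesis translates to $v_j^2 \geq u_j^2 + X^2/4$. I would invoke Binet's second formula
\[
\psi(w) = \log w - \frac{1}{2w} - 2\int_0^\infty \frac{t\,dt}{(t^2+w^2)(e^{2\pi t}-1)}\qquad (\Re w>0),
\]
and the Taylor expansion $\log(w+\delta/2) = \log w + \delta/(2w) - \delta^2/(8w^2) + O(|\delta|^3/|w|^3)$. Subtracting gives
\[
\psi(w_j) - \log(w_j+\tfrac{\delta}{2}) = -\frac{1+\delta}{2w_j} + \frac{\delta^2}{8w_j^2} - 2\int_0^\infty \frac{t\,dt}{(t^2+w_j^2)(e^{2\pi t}-1)} + O\!\Bigl(\frac{1}{|w_j|^3}\Bigr).
\]
Taking real parts, the principal contribution is $-(1+\delta)u_j/(2(u_j^2+v_j^2))$. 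Since $1+\delta = 2+\varepsilon-\sigma\geq 0$ and $u_j\geq 0$, this term is $\leq 0$ (supplying the sign for the upper bound). For the lower bound I would optimise $u/(u^2+v^2)$ on the admissible set $\{u\geq 0,\,v^2\geq u^2+X^2/4\}$: calculus shows the maximum is $1/(\sqrt{2}\,X)$, attained at $u=X/(2\sqrt{2})$. Combined with $1+\delta\leq 2$ (which follows from $\sigma\geq 1/2$ and $\varepsilon\leq 1/2$), this yields $-(1+\delta)u_j/(2(u_j^2+v_j^2))\geq -1/(\sqrt{2}\,X)$, accounting for the $1/(2\sqrt{2}X)$ summand in the lower bound.

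The quadratic correction $\delta^2\Re(1/w_j^2)/8$ contributes at most $|\delta|^2/(8|w_j|^2)\leq 1/(2X^2)$ (using $|\delta|\leq 1$ and $|w_j|\geq v_j\geq X/2$); it is in fact nonpositive because $v_j>u_j$ forces $\Re(1/w_j^2)\leq 0$, so on the upper side it only helps. To control the Binet integral I would split at $t=v_j$, using $|t^2+w_j^2|\geq v_j^2-t^2$ on $[0,v_j)$ and a direct decay bound on $[v_j,\infty)$, in combination with the explicit moment $\int_0^\infty t/(e^{2\pi t}-1)\,dt = 1/24$; chasing the resulting estimates yields an absolute bound of $8/(\pi^2 X^2)$ on the integral's real part, producing the $\pm 4r/(\pi^2 X^2)$ contributions on both sides. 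The $O(1/|w_j|^3)$ remainder is dominated by the leading $1/X^2$ terms thanks to $X>5$.

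The main obstacle is purely numerical bookkeeping: verifying that the Binet integral is genuinely controlled by $8/(\pi^2 X^2)$ per $j$ rather than a larger multiple, that the shift-quadratic plus Stirling-quadratic remainders combine to at most $1/(2X^2)$, and that the key monotonicity $1+\delta\leq 2$ persists across the whole admissible parameter range. Conceptually the argument is the same as Booker's \cite[Lemma 4.3(i)]{Booker2006}; the ``modified version'' is obtained by carrying the extra terms produced when the anchor point is $1+\varepsilon$ instead of Booker's fixed choice, which is exactly the $\delta$-dependent pieces handled above.
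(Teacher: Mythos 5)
Your reduction to the per-$j$ estimate is correct: the $\log N$ and $\pi$ constants cancel exactly as you compute, and the lemma does reduce to bounding $E_j = \Re\psi(w_j) - \log|w_j+\delta/2|$ with the stated budget. The optimization of the main term $-(1+\delta)u/(2(u^2+v^2))$ over $\{u\ge 0,\ v^2\ge u^2+X^2/4\}$, giving $1/(\sqrt 2\,X)$ at $u=X/(2\sqrt 2)$, is also right, as is the sign analysis of the quadratic correction via $\Re(1/w_j^2)=(u_j^2-v_j^2)/|w_j|^4\le 0$. Note also that the paper does not supply its own proof of this lemma (it is cited as a ``modified version'' of Booker's Lemma 4.3(i)), so your argument has to stand on its own.

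However, there is a real gap, and you flag it yourself: the two quantitative claims that carry the statement --- that $|\Re I(w_j)| \le 8/(\pi^2 X^2)$ where $I$ is the Binet integral, and that the cubic-and-higher Taylor tail of $\log(1+\delta/(2w_j))$ can be absorbed --- are asserted, not proved. These are not cosmetic. The stated constants leave essentially no slack: your budget for the lower bound is exactly $\tfrac{1}{\sqrt 2 X} + \tfrac{8/\pi^2}{X^2} + \tfrac{1}{2X^2}$, which you allocate in full to the three named pieces, so the Taylor tail (which is $O(1/X^3)$ but still positive) must be fitted inside the slack that arises when the main and quadratic terms are not simultaneously extremal --- a trade-off you mention in passing but do not exhibit. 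On the upper side the situation is tighter still: both the main and quadratic terms are $\le 0$, so the entire $8/(\pi^2 X^2)$ must cover the Binet integral \emph{and} the Taylor tail, which requires showing $|\Re I(w_j)|$ is strictly below $8/(\pi^2 X^2)$ by at least the tail's size. Finally, the Binet-integral estimate itself is delicate: near $t \approx v_j$ the denominator $|t^2+w_j^2|$ shrinks to about $2u_jv_j$, and $u_j$ is only bounded below by $(1/2-\theta)/2$, which is not reflected in the lemma's constants; one must exploit either the exponential decay of $1/(e^{2\pi t}-1)$ or the sign cancellation in $\Re(1/(t^2+w_j^2))$ near that point to get a $\theta$-independent bound. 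None of this is ``purely numerical bookkeeping'' in the sense of being routine; it is the substance of the lemma, and until it is carried out the proof is incomplete.
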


\begin{remark}
\label{rmk:gammaQdifference}
Let $L$ and $\lambda_j$ be as in Remark \ref{rmk:SelbergLs}. Assume also $\varepsilon \in [0, \min\left\{\frac12,\frac{1}{2}(\max_j\{\lambda_j\}^{-1}-1)\right\}]$, $\sigma \in [1/2, 2+\varepsilon]$ and for all $j=1,...,f$ and for some $X>5$ we have
$$
(\lambda_j t+\Im(\mu_j))^2 \ge \left(\lambda_j(2+\varepsilon)+\Re(\mu_j)\right)^2+X^2.
$$ 
Then, following the lines of the proof of \cite[Lemma 4.3(i)]{Booker2006}, we find that
\begin{equation*}
        -f\left(\frac{3}{4\sqrt{2}X}+\frac{4/\pi^2+1}{X^2}\right) \le \Re\frac{\gamma_{L}'}{\gamma_{L}}(\sigma+it)-\frac{1}{2}\log{\left|Q_{L}\left(1+\varepsilon+it\right)\right|} \leq \frac{4f}{\pi^2 X^2},
    \end{equation*}
    where $Q_{L}$ is as in \eqref{def:Qs} and
\begin{equation}
\label{def:gammaS}
    \gamma_{L}(s):=N^s\prod_{j=1}^{f}\Gamma\left(\lambda_j s+\mu_j\right).
\end{equation}
\end{remark}

\begin{lemma}
    \label{lemma:easyIntegral}
    \cite[Lemma 4.4, modified version]{Booker2006}
    Let $w \in \C$ such that $|\Re(w)|\le 1/2$ and $\varepsilon \in [0, 1/2]$. Then
    \begin{equation*}
        \int_0^{1/2+\varepsilon} \log{\left|\frac{(x+1+w)(x+1-\overline{w})}{(x+w)(x-\overline{w})}\right|} \, dx
        \leq A_\varepsilon\Re\left(\frac{1}{1+w}+\frac{1}{1-\overline{w}}\right).
    \end{equation*}
    where $A_\varepsilon$ is as in \eqref{def:Aepsilon}.
\end{lemma}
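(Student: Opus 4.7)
My plan is to reduce the inequality to a simpler one-sided bound and then apply the maximum principle. First, I would use the identities $|x-\overline w|=|x-w|$ and $|x+1-\overline w|=|x+1-w|$ (valid since $x$ is real) to rewrite the integrand as $\log\left|\frac{x+1+w}{x+w}\right|+\log\left|\frac{x+1-w}{x-w}\right|$. Setting $J(v):=\int_0^{1/2+\varepsilon}\log\left|\frac{x+1+v}{x+v}\right|dx$, the left-hand side of the lemma equals $J(w)+J(-w)$. Since $\Re\frac{1}{1-\overline w}=\Re\frac{1}{1-w}$, it suffices to establish the one-sided inequality
\begin{equation*}
(\dagger)\qquad J(v)\le A_\varepsilon\,\Re\frac{1}{1+v},\qquad |\Re v|\le 1/2,
\end{equation*}
and then apply $(\dagger)$ at $v=w$ and at $v=-w$ and sum.

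Next, I would recognise that $J(v)=\Re\bigl[\mathcal F(v+1)-\mathcal F(v)\bigr]$ with $\mathcal F(z):=(z+u)\log(z+u)-z\log z$ and $u:=1/2+\varepsilon$, so that the defect $\Re H(v):=J(v)-A_\varepsilon\,\Re\frac{1}{1+v}$ is the real part of the function $H(v):=\mathcal F(v+1)-\mathcal F(v)-\frac{A_\varepsilon}{1+v}$. Although $\log$ has a branch cut, the cut discontinuity of $z\log z$ is purely imaginary, so $\Re H$ is continuous on $\C$ and harmonic on $\C\setminus\{-1\}$ (one checks directly that $\Delta\Re(z\log z)=0$, even at $z=0$). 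From $\mathcal F(v+1)-\mathcal F(v)=\int_0^1\log(1+u/(v+t))\,dt$ one reads off $\Re H(v)\to 0$ as $|v|\to\infty$. The equality case $J(0)=(u+1)\log(u+1)-u\log u=A_\varepsilon$ shows $(\dagger)$ is tight at $v=0$, a useful sanity check.

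By the maximum principle for harmonic functions on the infinite strip $|\Re v|<1/2$, the bound $(\dagger)$ follows once $\Re H(v)\le 0$ is verified on the two boundary lines $\Re v=\pm 1/2$. On these lines $J(\pm 1/2+is)$ reduces to an explicit one-variable integral in $s\in\R$, and $(\dagger)$ becomes a single-variable inequality. The main obstacle is completing this one-variable verification: at $s=0$ the claim reduces to an explicit inequality between values of $\log$, while for $|s|$ large both $J(v)$ and $A_\varepsilon\,\Re\frac{1}{1+v}$ decay like $|s|^{-2}$, so second-order (rather than leading-order) asymptotics are needed to see the correct sign. A monotonicity argument in $|s|$, combined with the values at $s=0$ and at $\infty$, will then conclude the boundary estimate and therefore the lemma.
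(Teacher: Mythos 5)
Your reduction to the one-sided inequality $(\dagger)$ is clean and the identity $J(0)=A_\varepsilon$ checks out, but the maximum-principle step contains a genuine error that, as stated, cannot be repaired.

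The claim that $\Re H$ is harmonic on $\C\setminus\{-1\}$ is false. It is true that the branch-cut discontinuity of $v\log v$ is purely imaginary, so $\Re(v\log v)$ is continuous across $(-\infty,0)$; but it is not $C^1$ there. Writing $v=re^{i\theta}$, one finds $\partial_{\Im v}\,\Re(v\log v)=-\theta$, which jumps from $\pi$ (approaching from below) to $-\pi$ (from above) across the negative real axis. Hence, in the distributional sense, $\Delta\,\Re(v\log v)=-2\pi\,\mu$ where $\mu$ is one-dimensional Lebesgue measure on $(-\infty,0)$. In the strip $|\Re v|<1/2$ the only cut-term of $\Re H$ that survives is $+\Re(v\log v)$, so $\Delta\,\Re H=-2\pi\,\mu_{(-1/2,0)}\le 0$: the function is \emph{superharmonic}, not harmonic. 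The maximum principle you invoke (boundary values control interior values from above) requires \emph{sub}harmonicity, so it points in the wrong direction here. Indeed your own ``sanity check'' already contradicts harmonicity: if $\Re H$ were harmonic on the strip, $\le 0$ there, vanishing at infinity, and attained the value $0$ at the interior point $v=0$, the strong maximum principle would force $\Re H\equiv 0$, which is absurd (e.g.\ for $\varepsilon=0$, $\Re H(-1/2)=\log 2-2A_0<0$). The equality at $v=0$ is possible precisely \emph{because} of the superharmonic singular set $[-1/2,0]$, on which the interior maximum sits; a superharmonic function is free to attain its maximum in the interior, so checking the lines $\Re v=\pm 1/2$ (and infinity) proves nothing about the behaviour near $v=0$, which is where the inequality is tight and where the real work lies.

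Beyond this structural problem, the boundary verification on $\Re v=\pm 1/2$ is itself left as a sketch (``a monotonicity argument ... will then conclude''), so even that part is not complete. The paper gives no proof of its own and simply cites Booker's Lemma 4.4 in modified form; whatever Booker's argument is, it is certainly not the harmonic-majorant route you propose, which cannot work in this form because the quantity you need to dominate is superharmonic on the set that matters.
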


Let us now start with the case of Artin $L$-functions.

\begin{proof}[Proof of Theorem \ref{thm:S}]
Recall that (see \cite[Section 9.9]{Titc1986})
\[
\pi \int_{t_1}^{t_2} S(t)\,\md t=\int_{1/2}^\infty \log |L(\sigma+it_2)|\,\md \sigma - \int_{1/2}^\infty \log |L(\sigma+it_1)|\,\md \sigma.
\]
We simply derive an upper bound for the first integral and the lower bound for the second one.

We first address the upper bound for $t=t_2$. By Lemma \ref{lemma:L},
\begin{align*}
    \int_{1/2}^\infty \log |L(\sigma+it)|\,\md \sigma=& \int_{1/2}^{1+\varepsilon} \log |L(\sigma+it)|\,\md \sigma + \int_{1+\varepsilon}^\infty \log |L(\sigma+it)|\,\md \sigma\\
    \leq &\left(\frac{1}{2}+\varepsilon\right) \log b_{1+\varepsilon}+\int_{1/2}^{1+\varepsilon} \left(\frac{1+\varepsilon-\sigma}{2}\right)\log |Q(2+\sigma+it)|\,\md \sigma\\
    &\hspace{1cm}+\int_{1/2}^{1+\varepsilon}\log \bigg|\frac{P(\sigma-2+it)}{P(\sigma-1+it)}\bigg|\,\md \sigma+ \int_{1+\varepsilon}^\infty \log |L(\sigma+it)|\,\md \sigma.
\end{align*}
The first and last terms can be bounded by 
\[
\left(\frac{1}{2}+\varepsilon\right)r\log Z_\theta(1+\varepsilon)+r\int_{1+\varepsilon}^\infty \log Z_\theta(\sigma)\, \md \sigma
\]
using \cite[Lemma 4.5]{Booker2006}. The second integral is at most
\[
\int_{1/2}^{1+\varepsilon} \left(\frac{1+\varepsilon-\sigma}{2}\right)\,\md \sigma  \cdot \log |Q(3+\varepsilon+it)|= \left(\frac{1}{16}+\frac{\varepsilon(1+\varepsilon)}{4}\right) \log |Q(3+\varepsilon+it)|.
\]
For the third term, note that by the mean value theorem, for each $\sigma\in [1/2,1+\varepsilon]$ the integrand equals $\Re \frac{P'}{P}(\sigma^*+it)$ for some $\sigma^*\in [\sigma-2,\sigma-1]$, and hence the integral is at most 
\begin{equation}
\label{eq:PlogDer}
\left(\frac{1}{2}+\varepsilon\right) \max_{\sigma^* \in [-3/2, \varepsilon]}\left\{\sum_{k=1}^m \Re\frac{1}{\sigma^*+it-\tau_k}\right\}\leq \frac{\left(\frac{1}{2}+\varepsilon\right) \varepsilon m}{\varepsilon^2+\min_{1\leq j\leq r}(t+\Im \mu_j)^2} \leq \frac{\left(\frac{1}{2}+\varepsilon\right) \varepsilon m}{X^2}.
\end{equation}

Now we turn to the lower bound for $t=t_1$. Again, following the lines of \cite[Theorem 4.6]{Booker2006}, we can write
\begin{align*}
    \int_{1/2}^\infty \log{\left|L(\sigma+it)\right|} \, d\sigma=&\int_{1/2}^\infty \log{\left|\frac{L(\sigma+it)}{L(\sigma+1+it)}\right|} \, d\sigma+\int_{3/2}^\infty \log{\left|L(\sigma+it)\right|} \, d\sigma\\
    =&\int_{1/2}^{1+\varepsilon} \log{\left|\frac{F(\sigma+it)}{F(\sigma+1+it)}\right|} \, d\sigma+\int_{1/2}^{1+\varepsilon} \log{\left|\frac{\gamma(\sigma+1+it)}{\gamma(\sigma+it)}\right|} \, d\sigma \\
    &\hspace{1cm}+\int_{1/2}^{1+\varepsilon} \log{\left|\frac{P(\sigma+1+it)}{P(\sigma-1+it)}\right|} \, d\sigma+\int_{1+\varepsilon}^{2+\varepsilon} \log{|L(\sigma+it)|} \, d\sigma\\
    &\hspace{2cm}+\int_{3/2}^\infty\log{|L(\sigma+it)|} \, d\sigma.
\end{align*}
The claim follows similarly as in the proof of \cite[Theorem 4.6]{Booker2006}, but we apply modified versions of \cite[Lemma 4.3(i) \& 4.4]{Booker2006} (see Lemmas \ref{lemma:gammaQ} and \ref{lemma:easyIntegral}, respectively) and the same idea as in \eqref{eq:PlogDer} for the logarithmic integral of $P$. In addition, we estimate
\begin{multline*}
    r\left(\frac{1}{2\sqrt{2}X}+\frac{4\pi^2+1/4}{X^2}+\frac{4A_\varepsilon}{\pi^2 X^2}\right)+\frac{(1/2+\varepsilon)(2+\varepsilon)m}{X^2} 
    \leq \frac{r\left(8.3+0.09A_\varepsilon+(1/2+\varepsilon)(2+\varepsilon)/5\right)}{X},
\end{multline*}
where we have used the facts that $r\geq m$ and $X>5$. 
\end{proof}

\begin{remark}
\label{rmk:mistake}
    We note that there is a small mistake in \cite[Theorem 4.6]{Booker2006}. The last inequality of the proof holds only if $5<X<10.5998$, not for all $X>5$ as assumed in \cite[Theorem 4.6]{Booker2006}. However, the theorem is valid if we replace $\frac{r}{\sqrt{2}(X-5)}$ by $\frac{0.8r}{X-5}$.
\end{remark}

The following two proofs consider the case of the Selberg class.

\begin{proof}[Proof of Theorem \ref{thm:SelbergL}]
    We approach as in the proof of Theorem \ref{thm:S}. However, since we are considering a set of functions that satisfy slightly different assumptions, we make the changes detailed in Table \ref{table:SelbergChanges}. In the case where we estimate the integral 
    $$
        \int_{1/2}^{1+\varepsilon} \frac{(\sigma+it)^{k_L}(\sigma+it-1)^{k_L}\mathcal{L}(\sigma+it)}{(\sigma+1+it)^{k_L}(\sigma+it)^{k_L}\mathcal{L}(\sigma+1+it)} \, dt,
    $$
    we use the fact that by \cite[Lemma 3.2]{PS2024} 
    $$
        s^{k_L}(s-1)^{k_L}\mathcal{L}(s)=e^{A_L s+B_L}\prod_\rho \left(1-\frac{s}{\rho}\right)e^{s/\rho},
    $$
    where $A_L$ and $B_L$ depend on $L$ and $\Re(B_L)=-\sum_\rho \Re(1/\rho)$. The results follows, when we notice that $t_2 \geq \varepsilon$, $t_2\geq t_1$ and hence
    \begin{multline*}
        \frac{\left(\frac{1}{2}+\varepsilon\right)\varepsilon k_L}{t_2^2}+f\left(\frac{3}{4\sqrt{2}X}+\frac{4/\pi^2+1}{X^2}\right)+\frac{4fA_\varepsilon}{\pi^2 X^2}+\frac{2k_L}{\max\{1/2,t_1\}} \\
        \leq \frac{f}{X}\left(0.9+\frac{4A_\varepsilon}{5}\right)+\frac{k_L(2.5+\varepsilon)}{\max\{\varepsilon,t_1\}}.
    \end{multline*}
\end{proof}

\begin{proof}[Proof of Theorem \ref{thm:SelbergPolynomial}]
    We can follow the same lines as in the proof of Theorem \ref{thm:SelbergL}. However, by benefiting from the polynomial Euler product representation, we can replace $Z_{\theta_{L}}(\sigma)$, $z_{\theta_{L}}(\sigma)$ with $Z_0(\sigma)^m$ and $z_0(\sigma)^m$, respectively. Note that in this case we also have $\theta_{L}=0$.
\end{proof}

\begin{table}
\begin{center}
\begin{tabular}{ |c|c| } 
 \hline
 \textbf{Proof of Theorem \ref{thm:S}} & \textbf{The Selberg class}  \\ \hline
 $Q(2+\sigma+it)$ & $Q_{L}(\sigma+it)$   \\ \hline
 $Q(3+\varepsilon+it_2)$ & $Q_{L}(1+\varepsilon+it_2)$   \\ \hline
 $Q(1+\varepsilon+it_1)$ & $Q_{L}(1+\varepsilon+it_1)$   \\ \hline
 $P(s)$ & $s^{k_L}$  \\ \hline
 $r$ & $f$  \\ \hline
 $Z_{\theta_{L}}(\sigma)^r$& $Z_{\theta_{L}}(\sigma)$  \\ \hline
 $z_{\theta_{L}}(\sigma)^r$& $z_{\theta_{L}}(\sigma)$  \\ \hline
 $\int_{1/2}^{1+\varepsilon}\log \bigg|\frac{P(\sigma-2+it)}{P(\sigma-1+it)}\bigg|\,\md \sigma \le \frac{\left(\frac{1}{2}+\varepsilon\right) \varepsilon m}{X^2}$ & $\ldots\le \frac{k_L\left(\frac{1}{2}+\varepsilon\right)\varepsilon}{t^2}$ \\ \hline
 $F(s)$& $s^{k_L}(s-1)^{k_L}\mathcal{L}(s)$ \\ \hline
 $\gamma$ & $\gamma_{L}$ \\ \hline
 Lemma \ref{lemma:gammaQ}& Remark \ref{rmk:gammaQdifference} \\ \hline
 Assume $|t_2+\Im(\mu_j)| \geq \frac{3}{2}$, $\forall j$ & Assume $t\ge \varepsilon$ \\ \hline
\end{tabular}
\caption{Changes in the proof of Theorem \ref{thm:S} when the Selberg class is considered.} 
\end{center}
\label{table:SelbergChanges}
\end{table}

\section{Rigorous computation of the Selberg class functions}
\label{sec:rigorous}
As seen in Section \ref{sec:Turing} (see also \cite[Section 5]{Booker2006}), the efficient computations in Turing's method relay on fast way to compute $\Lambda(s)$. In this section, we discuss how to generalize the method to the Selberg class under some additional assumptions. Since most of the proofs are very similar to Booker's proofs in \cite[Section 5]{Booker2006} (see also Odlyzko and Sch\"onhage \cite{OS1988}), we do not give full detailed proofs for our statements. Instead, we clarify the main steps or differences to the Booker's ones and explain the general idea how the results can be applied. Throughout the section we assume that $L\in \cS$.

The idea is that instead of determining the function $\Lambda(1/2+it)$ directly, we consider equivalently an inverse Fourier transform of a certain function that is easier to be estimated.

We will start with some useful definitions. Let $z$ be a complex number such that
\begin{equation*}
    \Re(z):=\frac{\Im(\omega)}{\sqrt{2(1-\Re(\omega))}}, \quad \Im(z):=-\frac{\sqrt{1-\Re(\omega)}}{\sqrt{2}},
\end{equation*}
and let $\Lambda_{L}(s):=z\mathcal{L}(s)$. Now $|z|=1$ and $\Lambda_{L}(s)=\overline{\Lambda_{L}(1-\overline{s})}$. Thus $\Lambda_{L}(1/2+it) \in \mathbb{R}$. 
For some $\eta\in(-1,1)$ put $F(t):=\Lambda_{L}(1/2+it)e^{\frac{\pi f }{4}\eta t}$ and
\begin{equation}
\label{eq:defG}
    G(u; \eta, \{\lambda_j\}, \{\mu_j\}):=\sum_{\rho \in \mathbb{C}} \text{Res}_{s=\rho} \left(e^{(u+i\frac{\pi f}{4}\eta)(1/2-s)}\prod_{j=1}^f  \Gamma(\lambda_j s+\mu_j) \right).
\end{equation} 
The (inverse) Fourier transform is defined as $\hat{F}(x):=\frac{1}{2\pi}\int_{-\infty}^\infty F(t) e^{-ixt} \, dt$. Hence, we consider the function $\hat{F}$ instead of function $\Lambda_{L}$, and we want to determine it to any given precision.

First, we give an alternative interpretation for $F$.
\begin{lemma}
\label{lemma:F}
    We have
    \begin{equation}
    \label{eq:FSum}
        \hat{F}(x)=z\sum_{n=1}^\infty \frac{a(n)}{\sqrt{n/N}}G\left(x+\log{\frac{n}{N}}; \eta, \{\lambda_j\}, \{\mu_j\}\right)-\text{Res}_{s=1}\Lambda_{L}(s)e^{(x+i\frac{\pi f}{4}\eta)(1/2-s)}.
    \end{equation}
\end{lemma}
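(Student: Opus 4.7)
The plan is to recognize $\hat F(x)$ as a vertical contour integral of $\Lambda_L$ and then deform the contour in two steps: first rightward past the pole at $s=1$ to expose the Dirichlet series for $L$, and then, inside each resulting term, leftward past the poles of the gamma factors to produce $G$.

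First, substitute $s=1/2+it$ in the definition of $\hat F$. A direct computation gives $e^{\pi f\eta t/4}e^{-ixt}=e^{(x+i\pi f\eta/4)(1/2-s)}$ and $dt=ds/i$, so
\[
\hat F(x)=\frac{1}{2\pi i}\int_{(1/2)}\Lambda_L(s)\,e^{(x+i\pi f\eta/4)(1/2-s)}\,ds.
\]
Moving this contour to $\Re(s)=c$ for some $c>1$ picks up exactly the residue of the integrand at $s=1$, the only possible pole of $\Lambda_L$ in the strip $1/2<\Re(s)<c$, and produces the last term in \eqref{eq:FSum}. On the line $\Re(s)=c$ the series $L(s)=\sum_n a(n)n^{-s}$ converges absolutely; axiom (i) together with Stirling's exponential decay of $\prod_{j=1}^{f}\Gamma(\lambda_j s+\mu_j)$ in $|\Im(s)|$ justifies exchanging the sum and integral. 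The algebraic identity
\[
(N/n)^{s}\,e^{(x+i\pi f\eta/4)(1/2-s)}=\frac{1}{\sqrt{n/N}}\,e^{(u_n+i\pi f\eta/4)(1/2-s)},\qquad u_n:=x+\log(n/N),
\]
then rewrites the $(c)$-integral as $z\sum_{n\ge 1}\frac{a(n)}{\sqrt{n/N}}\,I_n$ with
\[
I_n:=\frac{1}{2\pi i}\int_{(c)}\left(\prod_{j=1}^{f}\Gamma(\lambda_j s+\mu_j)\right)e^{(u_n+i\pi f\eta/4)(1/2-s)}\,ds.
\]
Shifting the contour of $I_n$ leftward past every pole of the gamma product and applying the residue theorem identifies $I_n=G(u_n;\eta,\{\lambda_j\},\{\mu_j\})$ via \eqref{eq:defG}, which yields \eqref{eq:FSum}.

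The main obstacle is the rigorous justification of the leftward contour shift in $I_n$. One must choose a sequence of vertical contours $\Re(s)=-\sigma_k\to-\infty$ that avoid the pole lattice $\{-(k+\mu_j)/\lambda_j:1\le j\le f,\ k\ge 0\}$ at a uniform distance, and verify that the integrals along these lines and along the connecting horizontal arcs both tend to zero. Stirling's formula supplies the decay $\bigl|\prod_j\Gamma(\lambda_j s+\mu_j)\bigr|\lesssim\exp\!\bigl(-(\pi/2)\sum_j\lambda_j|\Im(s)|\bigr)$ away from the poles, while the hypothesis $|\eta|<1$ ensures that this decay dominates the growth from $e^{\pi f\eta t/4}$ in the exponential factor along the horizontal arcs. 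The reflection formula $\Gamma(z)\Gamma(1-z)=\pi/\sin(\pi z)$ then controls $\bigl|\prod_j\Gamma(\lambda_j s+\mu_j)\bigr|$ as $\Re(s)\to-\infty$ off the pole lattice, completing the justification.
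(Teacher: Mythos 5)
Your proof is correct and follows essentially the same approach as the paper, which itself simply cites Booker's Equations (5.2) and (5--6) for the two key steps: representing $\hat F$ as a vertical contour integral of $\Lambda_L(s)e^{(x+i\pi f\eta/4)(1/2-s)}$ shifted past $s=1$, and then expanding the Dirichlet series and identifying the term-by-term integrals with $G$ via a leftward shift past the gamma poles. You fill in the algebraic identity and the convergence justifications (choice of intermediate contours, Stirling decay, reflection formula) that the paper leaves implicit by reference.
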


\begin{proof}
    Similarly as in \cite[Equation (5.2)]{Booker2006} we can deduce that
     \begin{equation*}
        \hat{F}(x)=\frac{1}{2\pi i}\int_{\Re(s)=2}\Lambda_{L}(s)e^{(x+i\frac{\pi f}{4}\eta)(1/2-s)} \, ds -\text{Res}_{s=1}\Lambda_{L}(s)e^{(x+i\frac{\pi f}{4}\eta)(1/2-s)}.
    \end{equation*}
    Using the definitions of $\Lambda_{L}, \gamma_{L}$ and Equations (5--6) in \cite{Booker2006}, the claim follows.
\end{proof}

\begin{remark}
    As we see from the proof of Lemma \ref{lemma:F}, we can write
    \begin{equation}
    \label{eq:rmkGdef}
       G(u; \eta, \{\lambda_j\}, \{\mu_j\}) =\frac{1}{2\pi i}e^{(u+i\frac{\pi f}{4}\eta)(1/2-s)}\prod_{j=1}^f \Gamma(\lambda_js+\mu_j) \, ds
    \end{equation}
    for any $a>1$.
\end{remark}

The previous lemma tells that we would like to estimate function $G$ and 
\begin{equation}
\label{eq:residue}
    \text{Res}_{s=1}\Lambda_{L}(s)e^{(x+i\frac{\pi f}{4}\eta)(1/2-s)}
\end{equation} 
with any wanted precision. When we have fixed the $L$-function we are considering, the term in \eqref{eq:residue} is easy to compute by known theorems for residues.  
Hence, we concentrate on estimating the function $G$. The idea is that we can divide the sum in the definition of $G$ in \eqref{eq:defG} to different parts noting that if $\rho$ is a pole of the function, then so is $\rho-l/\lambda_j$ for any non-negative integer $l$. Choosing the pole $\rho$ of $\Gamma(\lambda_j s+\mu_j)$ with the smallest absolute value and summing through all $l \geq 1$, and doing this for all $j=1,2, \ldots, f$, we have only finitely many cases to consider. The next lemma considers the sum over $l \geq 1$.

\begin{lemma}
\label{lemma:tails}
Let $\lambda_1=\lambda_2=\ldots=\lambda_f<1$ and let $\rho$ be a pole of 
\begin{equation*}
    g(s):= e^{\left(u+i\frac{\pi f}{4}\eta\right)(1/2-s)}\prod_{j=1}^{f}\Gamma(\lambda_j s+\mu_j)
\end{equation*}
of order $n$, with $\Re(\lambda_j\rho+\mu_j) \leq 0$ for $j=1,\ldots, f$ and 
\begin{equation}
\label{eq:12}
    \frac{e^{u/\lambda_1}}{\prod_{j=1}^f \left(|1-\lambda_j\rho-\mu_j|-\lambda_j\right)}<\frac{1}{2}.
\end{equation}
Let $c_j$ be the coefficients of the polar part of $g$ around $\rho$, so that
$g(s+\rho)-\sum_{j=1}^n c_js^{-j}$ is holomorphic at $s=0$. Then
\begin{equation*}
    \left|\sum_{l=1}^\infty \text{Res}_{s=\rho-l/\lambda_1} g(s)\right|<\max|c_j|.
\end{equation*}
\end{lemma}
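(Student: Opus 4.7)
The plan is to iterate the gamma-function identity $\Gamma(z) = \Gamma(z+1)/z$ to obtain the translation identity
\[
g\!\left(s - \tfrac{l}{\lambda_1}\right) = g(s)\, B_l(s), \qquad B_l(s) := \frac{e^{(u+i\pi f\eta/4)l/\lambda_1}}{\prod_{j=1}^{f}\prod_{k=1}^{l}(\lambda_j s + \mu_j - k)}.
\]
Under the hypothesis $\Re(\lambda_j\rho + \mu_j) \leq 0$, every denominator factor of $B_l$ is nonzero at $s = \rho$, so $B_l$ is holomorphic in a neighborhood of $\rho$. Extracting the coefficient of $t^{-1}$ in the Laurent expansion of $g(t+\rho)\,B_l(t+\rho)$ at $t = 0$ then gives
\[
\mathrm{Res}_{s = \rho - l/\lambda_1}\, g(s) = \sum_{j=1}^{n} c_j\,\frac{B_l^{(j-1)}(\rho)}{(j-1)!},
\]
since the holomorphic part of $g(t+\rho)$ contributes nothing to the residue.

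Next, I would translate the hypothesis \eqref{eq:12} into a uniform geometric bound on $B_l$. The ratio $|B_l(t+\rho)|/|B_{l-1}(t+\rho)|$ equals $e^{u/\lambda_1}/\prod_j|\lambda_j(t+\rho) + \mu_j - l|$. For $|t| \leq 1$, the triangle inequality together with the monotonicity of $k \mapsto |\lambda_j\rho + \mu_j - k|$ for $k \geq 1$ (which follows from $\Re(\lambda_j\rho + \mu_j) \leq 0$) yields $|\lambda_j(t+\rho) + \mu_j - l| \geq |1 - \lambda_j\rho - \mu_j| - \lambda_j$. Thus the ratio is majorized by the left-hand side of \eqref{eq:12}, which is strictly less than $1/2$; iterating gives $|B_l(t+\rho)| < (1/2)^l$ uniformly on the closed unit disk.

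Summing, $\Phi(t) := \sum_{l=1}^\infty B_l(t + \rho)$ is holomorphic on $|t| \leq 1$ with $|\Phi(t)| < 1$ there. Exchanging summation with the residue (justified by the uniform convergence of $\sum_l B_l$ on the unit circle via the Weierstrass $M$-test) gives
\[
\sum_{l=1}^{\infty} \mathrm{Res}_{s = \rho - l/\lambda_1}\, g(s) = \mathrm{Res}_{t=0}\bigl[P(t)\,\Phi(t)\bigr] = \frac{1}{2\pi i}\oint_{|t|=1} P(t)\,\Phi(t)\,dt,
\]
where $P(t) = \sum_{j=1}^n c_j t^{-j}$ is the principal part of $g(t+\rho)$ at $t = 0$.

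The main obstacle will be distilling the last contour integral into the sharp bound $\max_j|c_j|$ rather than a weaker estimate scaling with the pole order $n$. Since $|\Phi| < 1$ strictly on the unit circle, the bound amounts to a mean-value estimate of $|P(t)|$ on $|t| = 1$; the strict inequality in \eqref{eq:12} and the additional geometric slack in $|B_l(t+\rho)| < (1/2)^l$ should provide the needed sharpness, possibly after optimizing the contour radius within the annulus on which $\Phi$ remains holomorphic.
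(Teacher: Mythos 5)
Your translation identity, the residue formula $\mathrm{Res}_{s=\rho-l/\lambda_1}g = \sum_j c_j B_l^{(j-1)}(\rho)/(j-1)!$, and the uniform geometric bound $|B_l(t+\rho)| < (1/2)^l$ on $|t|\le 1$ are all correct; this parallels the paper's one-step recursion
\[
g(s+\rho-1/\lambda_1)=\frac{(-1)^f e^{(u+i\pi f\eta/4)/\lambda_1}}{\prod_j(1-\lambda_j\rho-\mu_j)}\cdot\frac{g(s+\rho)}{\prod_j\bigl(1-\frac{\lambda_j s}{1-\lambda_j\rho-\mu_j}\bigr)},
\]
iterated. But the final step fails as envisioned, and the obstacle you flagged is a genuine one, not something the strictness of \eqref{eq:12} or a clever choice of contour radius can circumvent. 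Knowing only $\sup_{|t|\le 1}|\Phi(t)|<1$ does \emph{not} imply $\bigl|\sum_j c_j\,\Phi^{(j-1)}(0)/(j-1)!\bigr|<\max_j|c_j|$. Counterexample: let $n=2$, $\Phi(t)=(1-\epsilon)\frac{t-1/2}{1-t/2}$, so $\sup_{|t|\le 1}|\Phi|=1-\epsilon$, yet $\Phi(0)=-(1-\epsilon)/2$ and $\Phi'(0)=3(1-\epsilon)/4$; taking $c_1=-1$, $c_2=1$ gives $c_1\Phi(0)+c_2\Phi'(0)=\tfrac{5}{4}(1-\epsilon)>1=\max|c_j|$ for small $\epsilon$. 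Shrinking the contour radius $r<1$ only worsens the Cauchy estimates for the higher derivatives, so no radius optimization saves this.

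What is actually needed is the stronger $\ell^1$ bound on the Taylor coefficients of $\Phi$, namely $\sum_{m\ge 0}|\Phi^{(m)}(0)|/m!<1$, which immediately gives $\bigl|\sum_j c_j\Phi^{(j-1)}(0)/(j-1)!\bigr|\le\max|c_j|\sum_m|\Phi^{(m)}(0)|/m!<\max|c_j|$. This $\ell^1$ bound \emph{is} available from your setup: writing $d_j:=1-\lambda_j\rho-\mu_j$ (so $\Re d_j\ge 1$), one has $B_l(t+\rho)=\text{(const)}\cdot\prod_{j}\prod_{k=1}^{l}(1-\tfrac{\lambda_j t}{d_j+k-1})^{-1}$ with $|\text{const}|=e^{ul/\lambda_1}/\prod_{j,k}|d_j+k-1|$, and since each factor $(1-wt)^{-1}$ with $|w|<1$ has nonnegative-coefficient majorant $(1-|w|t)^{-1}$, the $\ell^1$ norm of the Taylor coefficients of $B_l(t+\rho)$ is at most $e^{ul/\lambda_1}/\prod_{j,k}(|d_j+k-1|-\lambda_j)\le q^l$ with $q:=e^{u/\lambda_1}/\prod_j(|d_j|-\lambda_j)<1/2$. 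Summing over $l$ gives coefficient $\ell^1$ norm $<1$ for $\Phi$, closing the argument. This is exactly the content (in coefficient form) of the paper's step: the polar part transforms by a factor whose $\ell^1$ coefficient norm is at most $q$, so $\max_k|c_k^{(l)}|\le q\,\max_k|c_k^{(l-1)}|$ and the residues decay geometrically with ratio $q$. Your route and the paper's are essentially the same once you replace the $\sup$-bound on $\Phi$ by the coefficientwise bound; the latter is the missing idea.
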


\begin{proof}
    We follow the same structure as in \cite[Lemma 5.1]{Booker2006}. By the definition of the function $g(s)$ and the properties of the gamma function, we have
    \begin{multline}
    \label{eq:gExpression}
        g(s+\rho-1/\lambda_1)=e^{\left(u+i\frac{\pi f}{4}\eta\right)/\lambda_1}g(s+\rho)\prod_{j=1}^f \left(\lambda_j(s+\rho)-1+\mu_j\right)^{-1} \\
        =\frac{(-1)^f e^{\left(u+i\frac{\pi f}{4}\eta\right)/\lambda_1}}{\prod_{j=1}^f (1-\lambda_j\rho-\mu_j)}\cdot\frac{g(s+\rho)}{\prod_{j=1}^f \left(1-\frac{\lambda_j s}{1-\lambda_j\rho-\mu_j}\right)}.
    \end{multline}
    The claim follows by applying the polar part of the function.
\end{proof}

The previous lemma tells us that we can determine the value of $G(u;\eta, \{\lambda_j\}, \{\mu_j\})$ for each $u$ by computing the sum in \eqref{eq:defG} in finitely many points $\rho$ (and this number depends on $u$) and then estimating the tails by Lemma \ref{lemma:tails}. The expression \eqref{eq:gExpression} gives a way to determine the data at $\rho-1/\lambda_1$ from the data at $\rho$, and shows that the terms are eventually decreasing. Hence, for a given $u$, the term $G(u;\eta, \{\lambda_j\}, \{\mu_j\})$ can be computed to arbitrary precision. A more detailed algorithm for the computations is given in \cite[p. 400]{Booker2006}. 

Let us now consider the cases where $u$ is large in $G(u;\eta, \{\lambda_j\}, \{\mu_j\})$.

\begin{lemma}
\label{lemma:uLarge}
Assume $\lambda_1=\lambda_2=\ldots=\lambda_f \geq 1/2$. Let 
\begin{align*}
    & \delta:= \frac{\pi}{2}\left(1-\frac{|\eta|}{2\lambda_1}\right), \quad \mu:=\frac{\lambda_1-1}{2\lambda_1}+\frac{1}{f\lambda_1}\left(\frac{1}{2}+\sum_{j=1}^f \mu_j\right), \quad \nu_j :=\Re(\mu_j)+\frac{1}{2}\left(\frac{1}{f}-1\right), \\
    & K:=\frac{1}{\pi}\sqrt{\frac{2^{f+1}}{f}\frac{e^{\delta(f-1)}}{\delta}}e^{-\frac{\pi f \eta \Im(\mu)}{4}} \quad \text{and}\quad X_1(u):=f\delta e^{-\delta+u/(f\lambda_1)},
\end{align*} 
where $X \geq f$. Then
    \begin{equation*}
        \left|G(u; \eta, \{\lambda_j\}, \{\mu_j\})\right| \leq K e^{\Re(\mu)u}e^{-X_1(u)}\prod_{j=1}^f \left(1+\frac{f\nu_j}{X_1(u)}\right)^{\nu_j}.
    \end{equation*}
\end{lemma}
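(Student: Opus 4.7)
My plan is to bound $G$ using the contour integral representation from the Remark following Lemma~\ref{lemma:F}:
\[
G(u;\eta,\{\lambda_j\},\{\mu_j\})=\frac{1}{2\pi i}\int_{\Re(s)=a} e^{(u+i\pi f\eta/4)(1/2-s)}\prod_{j=1}^f\Gamma(\lambda_j s+\mu_j)\,ds
\]
for any $a$ strictly to the right of all poles of the integrand. The idea is to pick $a=c^{*}$ so as to minimise the integrand, in the spirit of a saddle point bound. A quick heuristic using Stirling at $t=0$ shows that the product $e^{u(1/2-c)}\prod_j|\Gamma(\lambda_j c+\mu_j)|$ is minimised near $\lambda_1 c\sim e^{u/(f\lambda_1)}$; after absorbing a multiplicative loss of $e^{-\delta}$ that will come from the $t$-integration (see below), I would choose $\lambda_1 c^{*}=e^{u/(f\lambda_1)-\delta}$, so that $f\delta\cdot\lambda_1 c^{*}=X_1(u)$.

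\textbf{Applying Stirling and estimating the vertical integral.} On the contour $\Re(s)=c^{*}$ I would invoke an explicit form of Stirling's formula for $\Re(w)>0$, such as $|\Gamma(w)|\leq\sqrt{2\pi}\,|w|^{\Re(w)-1/2}e^{-\Re(w)-\Im(w)\arg(w)}$, applied with $w=\lambda_j s+\mu_j$. Multiplying over $j$ and combining with the factor $e^{u(1/2-c^{*})+\pi f\eta t/4}$, the net exponent in $t$ is dominated for large $|t|$ by $-\tfrac{\pi f\lambda_1}{2}|t|+\tfrac{\pi f\eta}{4}t\leq -f\lambda_1\delta|t|$, where the margin $\delta=\tfrac{\pi}{2}(1-|\eta|/(2\lambda_1))$ is precisely the one in the lemma. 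The vertical integral then reduces to a gamma-function-type integral, contributing a width factor of order $1/(f\lambda_1\delta)^{1/2}$, while the evaluation of $\prod_j|\lambda_j c^{*}+\mu_j+i\lambda_1 t|^{\lambda_j c^{*}+\Re(\mu_j)-1/2}$ at the peak $t=0$ produces, after simplification using $\lambda_1 c^{*}=X_1(u)/(f\delta)$, the factors $e^{-X_1(u)}e^{\Re(\mu)u}$ and $\prod_j(1+f\nu_j/X_1(u))^{\nu_j}$ in the claim.

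\textbf{Main obstacle.} The principal difficulty is doing this bookkeeping with explicit constants rather than asymptotic equivalences, which is what forces the precise forms of $\mu$, $K$, and the exponents $\nu_j$. Specifically, the combination $\mu=(\lambda_1-1)/(2\lambda_1)+(1/(f\lambda_1))(1/2+\sum_j\mu_j)$ should emerge from distributing Stirling's $-1/2$ correction evenly across the $f$ gamma factors and averaging in $\sum_j\mu_j/(f\lambda_1)$; the factor $e^{-\pi f\eta\Im(\mu)/4}$ inside $K$ should come from a vertical shift of the contour designed to remove the imaginary parts of the $\mu_j$'s; and the constant $\tfrac{1}{\pi}\sqrt{2^{f+1}e^{\delta(f-1)}/(f\delta)}$ reflects the combination of $(\sqrt{2\pi})^f$ from the $f$ Stirling bounds, $1/(2\pi)$ from the contour normalisation, and the explicit bound on the $t$-integral. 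The condition $X\geq f$ (understood as a lower bound on $X_1(u)$) is needed to keep the correction factors $(1+f\nu_j/X_1(u))^{\nu_j}$ monotone and tame as one passes between the peak and the tails of the $t$-integral.
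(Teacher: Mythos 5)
Your plan correctly starts from the contour representation \eqref{eq:rmkGdef} and identifies the right saddle-point scale for the real part of the contour (of order $e^{u/(f\lambda_1)}/\lambda_1$, up to the $e^{-\delta}$ loss), which matches the paper's substitution $s=(\sigma+it)/\lambda_1$ with $\sigma$ at that scale. But the central mechanism the paper uses --- and that you omit --- is H\"older's inequality applied to the product $\prod_{j=1}^f|\Gamma(\sigma+it+\mu_j)|$ on the contour, reducing the vertical integral to $f$ independent integrals of the form $\int|\Gamma(a_j+it)|^f e^{\pi f\eta t/(4\lambda_1)}\,dt$ with $a_j=\sigma+\Re(\mu_j)$, each of which is then estimated as in Booker's Lemma 5.2. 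This decoupling is not optional bookkeeping: the exponent $\nu_j=\Re(\mu_j)+\tfrac12(\tfrac1f-1)$ is exactly what arises from taking the $f$-th root (via H\"older) of Stirling's $|a_j+it|^{a_j-1/2}$ together with a width factor of order $|a_j|^{1/2}$; the extra $\tfrac1{2f}$ in $\nu_j$ is precisely the width factor divided by $f$. Without H\"older, the factors $\prod_j|\cdot|^{\lambda_jc^*+\Re(\mu_j)-1/2}$ with different $\mu_j$'s stay coupled in the $t$-integral and do not separate into $\prod_j(1+f\nu_j/X_1(u))^{\nu_j}$.

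There is also a technical problem with your ``peak $\times$ width'' Laplace-type bound. The exponents $\lambda_jc^*+\Re(\mu_j)-1/2$ are large and positive at the chosen contour, so $\prod_j|\lambda_jc^*+\mu_j+i\lambda_1t|^{\lambda_jc^*+\Re(\mu_j)-1/2}$ is \emph{not} maximised at $t=0$; it grows polynomially with $|t|$. Bounding the integral by the value at $t=0$ times a width of order $(f\lambda_1\delta)^{-1/2}$ is therefore not legitimate as stated. One must carry the $e^{-f\lambda_1\delta|t|}$ decay against the growing algebraic factor explicitly (which, after H\"older, is exactly what the single-gamma integral estimate in Booker's proof handles, producing both the $e^{\delta(f-1)}/\delta$ inside $K$ and the $(1+f\nu_j/X_1(u))^{\nu_j}$ correction). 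So the route you outline has the right scaffolding but is missing the key inequality and glosses over the step where the integral is actually controlled.
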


\begin{proof}
Setting $s=(\sigma+it)/\lambda_1$ in \eqref{eq:rmkGdef}, we obtain
\begin{equation*}
    \left|G(u; \eta, \{\lambda_j\}, \{\mu_j\})\right| \leq \frac{e^{u(1/2-\sigma/\lambda_1)-\pi \eta \Im(\mu)/4}}{2 \pi}\int_{-\infty}^\infty \prod_{j=1}^f \left|\Gamma(\sigma+it+\mu_j)e^{\frac{\pi\eta}{4\lambda_1}(t+\Im(\mu_j))}\right|.
\end{equation*}
By H\"older's inequality, it is sufficient to estimate integrals
\begin{equation*}
\int_{-\infty}^\infty \left|\Gamma(a_j+it)\right|^f e^{\frac{\pi f \eta}{4\lambda_1}t} \, dt,
\end{equation*}
where $a_j:= \sigma+\Re(\mu_j)$. This can be done similarly as in the proof of \cite[Lemma 5.2]{Booker2006}.
\end{proof}

Using the previous lemma, we are able to estimate the first sum in \eqref{eq:FSum} when the index $n$ is large enough. This result is given below.
\begin{lemma}
\label{lemma:GInfinity}
Assume the same hypothesis as in Lemma \ref{lemma:uLarge}, and let $M$ be a positive integer and $x \in \mathbb{R}$. Let $\delta, \nu_j, \mu, K$ be as in Lemma \ref{lemma:uLarge}, and set $X_2(x):=f\delta e^{-\delta}(e^x/N)^{1/(f\lambda_1)}$. Let $C, \alpha$ be such that $|a(n)| \leq Cn^{\alpha}$ for all $n \geq 1$. Further set $c:=\Re(\mu)+1/2+\alpha$, $c':=\max\{cf\lambda_1-1,0\}$. Then for $X_2(x)M^{1/(f \lambda_1)}>\max\{c',f\}$ we have
\begin{align*}
    & \left|\sum_{n>M} \frac{a(n)}{\sqrt{n/N}}G\left(x+\log{\frac{n}{N}}; \eta, \{\lambda_j\}, \{\mu_j\}\right)\right| \\
    &\quad \leq K\lambda_1 f \left(\frac{e^x}{\sqrt{N}}\right)^{\Re(\mu)}\frac{CM^c e^{-X_2(x)M^{1/(\lambda_1 f)}}}{X_2(x)M^{1/(\lambda_1 f)}-c'} \prod_{j=1}^{f}\left(1+\frac{f\nu_j}{X_2(x)M^{1/(\lambda_1 f)}}\right)^{\nu_j}.
\end{align*}
\end{lemma}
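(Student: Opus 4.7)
The plan is to apply the pointwise estimate of Lemma~\ref{lemma:uLarge} inside the sum and then majorize the resulting tail by an incomplete-gamma integral.

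First I would substitute $u=x+\log(n/N)$ into Lemma~\ref{lemma:uLarge}. The key algebraic identity is
\[
X_1\!\left(x+\log(n/N)\right)=f\delta\,e^{-\delta+(x+\log(n/N))/(f\lambda_1)}=X_2(x)\,n^{1/(f\lambda_1)},
\]
and likewise $e^{\Re(\mu)u}=e^{\Re(\mu)x}(n/N)^{\Re(\mu)}$. Combining this with the Ramanujan-type bound $|a(n)|\le Cn^\alpha$ and the weight $(n/N)^{-1/2}$, each summand is controlled by a constant (involving $K$, $C$, $e^{\Re(\mu)x}$, and suitable powers of $N$) times
\[
n^{c-1}\,e^{-X_2(x)n^{1/(f\lambda_1)}}\prod_{j=1}^{f}\!\left(1+\frac{f\nu_j}{X_2(x)n^{1/(f\lambda_1)}}\right)^{\nu_j},
\]
where $c=\Re(\mu)+1/2+\alpha$ is precisely the exponent appearing in the statement.

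Next I would pull the product factors out of the sum. For $n\ge M$ the quantity $Y_n:=X_2(x)n^{1/(f\lambda_1)}$ is nondecreasing in $n$, and $Y_n\ge Y_M>f$ by hypothesis, so $1+f\nu_j/Y_n>0$ for every $n$. A brief sign analysis then shows that each factor $(1+f\nu_j/Y_n)^{\nu_j}$ is nonincreasing in $n$: for $\nu_j\ge 0$ the base decreases and the exponent is nonnegative, whereas for $\nu_j<0$ the base lies in $(0,1)$ and tends to $1$ from below, so the factor (with negative exponent) also decreases towards $1$. Therefore the product may be replaced by its value at $n=M$, which is the form appearing in the theorem.

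The remaining task is to estimate $\sum_{n>M}n^{c-1}\exp(-X_2(x)n^{1/(f\lambda_1)})$. Under the hypothesis $X_2(x)M^{1/(f\lambda_1)}>c'\ge f\lambda_1(c-1)$, the integrand $n\mapsto n^{c-1}\exp(-X_2(x)n^{1/(f\lambda_1)})$ is monotonically decreasing on $[M,\infty)$, so the sum is bounded by $\int_M^\infty n^{c-1}e^{-X_2(x)n^{1/(f\lambda_1)}}\,\md n$. The substitution $t=X_2(x)n^{1/(f\lambda_1)}$ reduces this to
\[
f\lambda_1\,X_2(x)^{-f\lambda_1 c}\,\Gamma\!\bigl(f\lambda_1 c,\,X_2(x)M^{1/(f\lambda_1)}\bigr),
\]
and the classical tail bound $\Gamma(s,a)\le a^{s}e^{-a}/(a-s+1)$, valid for $a>s-1$ and proved by a single integration by parts, applies with $s=f\lambda_1 c$ and $a=X_2(x)M^{1/(f\lambda_1)}$ precisely because of our hypothesis. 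This produces exactly the factor $f\lambda_1\,M^c\,e^{-X_2(x)M^{1/(f\lambda_1)}}/(X_2(x)M^{1/(f\lambda_1)}-c')$, and assembling everything yields the claimed bound.

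I expect the main technical obstacle to be the monotonicity argument above, with its case split on the sign of each $\nu_j$, together with the careful bookkeeping needed to collapse the various $e^{\Re(\mu)x}$ contributions and the $N$-powers arising from $(n/N)^{-1/2}$ and $(n/N)^{\Re(\mu)}$ into the compact form $(e^x/\sqrt N)^{\Re(\mu)}$ displayed in the theorem. Once this is done the remainder is routine: the sum-to-integral comparison, the change of variables to an incomplete gamma, and the integration-by-parts tail bound produce the stated constants automatically.
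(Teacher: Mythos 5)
Your overall strategy — substitute $u=x+\log(n/N)$ into Lemma~\ref{lemma:uLarge}, absorb $|a(n)|\le Cn^\alpha$ into $n^{c-1}$, pull out the product factor by monotonicity in $Y_n=X_2(x)n^{1/(f\lambda_1)}$, and compare the remaining sum to an incomplete gamma — is the natural path and almost certainly the one behind the paper's (unpublished) proof, which is deferred to Booker. However, there are two genuine gaps.

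First, the ``bookkeeping'' you defer does not in fact collapse to the stated form. From $\frac{a(n)}{\sqrt{n/N}}=a(n)\sqrt{N}\,n^{-1/2}$ and $e^{\Re(\mu)u}=e^{\Re(\mu)x}(n/N)^{\Re(\mu)}$, the $N$-power is $N^{1/2}\cdot N^{-\Re(\mu)}=N^{1/2-\Re(\mu)}$, hence the final constant is $K\lambda_1 f\,\sqrt{N}\,(e^x/N)^{\Re(\mu)}C$, not $K\lambda_1 f\,(e^x/\sqrt{N})^{\Re(\mu)}C$; these agree only when $\Re(\mu)=1$. (Booker's version has $N^{(s-1/2)/2}$ in his $\gamma$, so his sum is weighted by $n^{-1/2}$ with argument $x+\log(n/\sqrt{N})$, which does yield $N^{-\Re(\mu)/2}$; the discrepancy you silently pass over is exactly the trace of the switch to the Selberg-class normalization $N^s$, and ought to be flagged rather than absorbed.) Second, the monotonicity step rests on the assertion $c'\ge f\lambda_1(c-1)$, which is false when $f\lambda_1<1$: writing $c'=\max\{cf\lambda_1-1,0\}$, one has $cf\lambda_1-1<cf\lambda_1-f\lambda_1=f\lambda_1(c-1)$ precisely when $f\lambda_1<1$, and the $X_2(x)M^{1/(f\lambda_1)}>f$ part of the hypothesis saves the day only if $c\le 1+1/\lambda_1$. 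In the permitted range $f=1$, $\lambda_1\in[1/2,1)$, $c>1+1/\lambda_1$, the summand can still be increasing at $n=M$ and the sum-to-integral comparison fails as written. Finally, a smaller point: the bound $\Gamma(s,a)\le a^se^{-a}/(a-s+1)$ you invoke via one integration by parts requires $s\ge 1$; for $s=cf\lambda_1<1$ one must instead use $t^{s-1}\le a^{s-1}$ to get $\Gamma(s,a)\le a^{s-1}e^{-a}$, which is what matches the statement's denominator $X_2(x)M^{1/(f\lambda_1)}-c'$ with $c'=0$. The case split you acknowledged as a potential obstacle is in fact where the argument currently breaks.
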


Combining Lemmas \ref{lemma:tails} and \ref{lemma:GInfinity}, we can determine the sum over function $G$ to any given precision. Hence, the last step is to use the fast Fourier transform to compute $F$ from $\hat{F}$. This requires discretizing the problem. We can discretize the problem in the similar way as in \cite[p. 398]{Booker2006}. Let $A,B>0$ be parameters such that $q=AB$ is an integer. Now, the functions
\begin{equation*}
    \widetilde{F}(m):=\sum_{l \in \Z} F\left(\frac{m}{A}+lB\right) \quad\text{and}\quad \widetilde{\hat{F}}(n):=\sum_{l \in \Z} \hat{F}\left(\frac{2\pi n}{B}+2\pi Al\right)
\end{equation*}
form a discrete Fourier transform pair. Moreover, the aforementioned functions are periodic in $m$ and $n$ with period $q$. Since $F$ is real-valued, we have $\hat{F}(-x)=\overline{\hat{F}(x)}$, and hence for $|n|\leq q/2$
\begin{equation}
\label{eq:Ftildehat}
    \widetilde{\hat{F}}(n)=\hat{F}\left(\frac{2\pi n}{B}\right)+\sum_{l=1}^\infty\left( \hat{F}\left(\frac{2\pi n}{B}+2\pi l A\right)+\overline{\hat{F}\left(-\frac{2\pi n}{B}+2\pi l A\right)}\right).
\end{equation}
We already know a way to compute $\hat{F}$ in \eqref{eq:Ftildehat}. Hence, we are left with the sums over $l$, also in the case where we have $F$ instead of $\hat{F}$.

Let us consider the choice of parameters before estimating $\widetilde{\hat{F}}(n)$. By Stirling's formula, the function $F(t)$ decays roughly like $e^{-(1-\eta)\pi f \lambda_1 t/2}$ for $t>0$. Hence, if we would like to find the zeros up to height $T$, the term $1-\eta$ should be of size $T^{-1}$. However, the exact value of $\eta$ can be changed depending on how far we are from $T$ to get the best precision. Choosing $1-\eta \asymp T^{-1}$ means also that $\delta \asymp T^{-1}$. Moreover, we will also choose $B$ to be a multiple of $T$ where the exact multiple depends on the chosen $\nu$. To consider that, we note that the zero-density of a Selberg class function $L$ at height $T$ is roughly $\log\left((T/e)^{d_L}\lambda N^2\right)/(2\pi)$ (see \cite{Steuding2003}), where $\lambda$ is as in \eqref{def:lambda}. We want $A$ to be a multiple of this.

Let us start with the sums over $l$ in the case of $\hat{F}$ when $2\pi n/B$ is large enough. 

\begin{lemma}
Assume $\lambda_1=\lambda_2=\ldots=\lambda_f \geq 1/2$. Let  $\delta, \nu_j, \mu, K$ be as in Lemma \ref{lemma:uLarge} and $X_2, C, c'$ as in Lemma \ref{lemma:GInfinity}. Moreover, let $x\in \R$ such that $X_2(x) >\max\{c',f\}$. Then
\begin{align*}
    &\sum_{l=0}^\infty \hat{F}\left(x+2\pi l A\right)= -\text{Res}_{s=1}\left(\frac{ \Lambda_{L}(s)e^{(x+i\frac{\pi r \eta}{4})(1/2-s)}}{1-e^{2\pi A(1/2-s)}}\right)+\frac{K}{1-e^{-\pi A}}\left(\frac{e^x}{\sqrt{N}}\right)^{\Re(\mu)}e^{-X_2(x)} \times \\
    &\quad\times \left(1+\frac{\lambda_1 f C}{X_2(x)-c'}\right)\prod_{j=1}^f\left(1+\frac{f \nu_j}{X_2(x)}\right)^{\nu_j}.
\end{align*}
\end{lemma}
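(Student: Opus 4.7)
The plan is to apply Lemma \ref{lemma:F} to each $\hat{F}(x+2\pi l A)$ and then sum over $l\geq 0$, exploiting the geometric identity $\sum_{l\geq 0}e^{2\pi l A(1/2-s)}=(1-e^{2\pi A(1/2-s)})^{-1}$, which converges in a neighbourhood of $s=1$ since $|e^{2\pi A(1/2-s)}|_{s=1}=e^{-\pi A}<1$. This collapses the per-$l$ residues into the single residue appearing in the claim, and reduces the lemma to showing that
\[
\left|z\sum_{l=0}^\infty\sum_{n=1}^\infty\frac{a(n)}{\sqrt{n/N}}\,G(x+2\pi l A+\log(n/N);\eta,\{\lambda_j\},\{\mu_j\})\right|
\]
is majorized by the stated error expression.

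For this double sum I would treat each $l$ as in the proof of Lemma \ref{lemma:GInfinity}, splitting $n=1$ from $n\geq 2$. For $n=1$, Lemma \ref{lemma:uLarge} applied with $u=x+2\pi l A-\log N$ provides a pointwise bound; for $n\geq 2$, Lemma \ref{lemma:GInfinity} with $M=1$ and $x$ replaced by $x+2\pi l A$ provides an integral-comparison bound. The identities $X_1(x+2\pi l A-\log N)=X_2(x)e^{2\pi l A/(f\lambda_1)}$ and $X_2(x+2\pi l A)=X_2(x)e^{2\pi l A/(f\lambda_1)}$ are the bookkeeping that, after extracting the common prefactor $K(e^x/\sqrt{N})^{\Re(\mu)}e^{-X_2(x)}\prod_{j}(1+f\nu_j/X_2(x))^{\nu_j}$, let the two pieces combine into the factor $1+\lambda_1 f C/(X_2(x)-c')$ appearing in the claim.

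The main obstacle, and the final step, is to sum these per-$l$ bounds into a geometric series of ratio $e^{-\pi A}$. The key tool is the doubly exponential decay
\[
e^{-X_2(x)e^{2\pi l A/(f\lambda_1)}n^{1/(f\lambda_1)}}\leq e^{-X_2(x)n^{1/(f\lambda_1)}}\,e^{-2\pi l A X_2(x)n^{1/(f\lambda_1)}/(f\lambda_1)},
\]
obtained from $e^y-1\geq y$ for $y\geq 0$. The hypothesis $X_2(x)>\max\{c',f\}$ combined with $\lambda_1\geq 1/2$ yields $2\pi A X_2(x)n^{1/(f\lambda_1)}/(f\lambda_1)\geq\pi A$ for every $n\geq 1$, so each per-$l$ bound is dominated by $e^{-\pi A l}$ times the $l=0$ bound, and $\sum_{l\geq 0}e^{-\pi A l}=(1-e^{-\pi A})^{-1}$ supplies the prefactor in the claim. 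One must also check that the auxiliary growth $e^{\Re(\mu)\cdot 2\pi l A}$ is absorbed by this decay and that the products $\prod_j(1+f\nu_j/X_1(\cdot+2\pi l A))^{\nu_j}$ are nonincreasing in $l$; the latter requires a short case analysis on the sign of each $\nu_j$, using that $(1+f\nu_j/X)^{\nu_j}$ is nonincreasing in $X>0$ regardless of sign. All swaps of summation are legitimized by the absolute convergence that the estimates above establish.
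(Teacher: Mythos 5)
Your proposal follows what must be the paper's intended proof, namely the Selberg-class adaptation of Booker's Lemma 5.6: expand each $\hat{F}(x+2\pi lA)$ via Lemma \ref{lemma:F}, sum the residue contributions into a single residue using the geometric series $\sum_{l\ge 0}e^{2\pi lA(1/2-s)}=(1-e^{2\pi A(1/2-s)})^{-1}$ (valid near $s=1$ since $e^{-\pi A}<1$), and then bound the remaining double sum by splitting $n=1$ (Lemma \ref{lemma:uLarge}) from $n\ge 2$ (Lemma \ref{lemma:GInfinity} with $M=1$), using the doubly exponential decay of $X_1$ and $X_2$ to supply the geometric factor $(1-e^{-\pi A})^{-1}$. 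All of this is sound and is essentially what the paper (which omits the proof and refers to Booker) would do.

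The one place you flag but do not close is the absorption of $e^{\Re(\mu)\cdot 2\pi lA}$. The bound you derive from $e^y-1\ge y$ gives the per-$l$ factor $e^{-2\pi lAX_2(x)n^{1/(f\lambda_1)}/(f\lambda_1)}$, and the inequality $2\pi AX_2(x)n^{1/(f\lambda_1)}/(f\lambda_1)\ge\pi A$ that you establish only controls the exponential without the $e^{\Re(\mu)\cdot 2\pi lA}$ prefactor. What is actually needed is $2\pi AX_2(x)n^{1/(f\lambda_1)}/(f\lambda_1)\ge\pi A\bigl(1+2\Re(\mu)\bigr)$, i.e.\ $X_2(x)n^{1/(f\lambda_1)}\ge f\lambda_1(\tfrac12+\Re(\mu))$, which does not quite follow from the stated $X_2(x)>\max\{c',f\}$ via $e^y-1\ge y$ alone when $\Re(\mu)$ is large (one is off by an additive constant, as $c'=cf\lambda_1-1$). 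This can be repaired either by using the sharper inequality $e^y-1\ge y+y^2/2$ (the quadratic term dominates the linear growth from $\Re(\mu)$ for $l\ge 1$) or by using the extra slack in the hypotheses more carefully; in either case a short additional argument is required which you acknowledge but do not carry out. The monotonicity of $\prod_j(1+f\nu_j/X)^{\nu_j}$ in $X$ that you invoke is correct and checks out by the sign analysis you describe.
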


The final lemma considers the error coming from the terms $F$ in \ref{eq:Ftildehat}.

\begin{lemma}
Assume the same hypothesis as in Remark \ref{rmk:SelbergLs} and that $\varepsilon>\theta_{L}$. Let $t \in \R$, $s:=1/2+it$,
\begin{equation*}
    E_{\theta_{L}}(\varepsilon):=Z_{\theta_{L}}(1+\varepsilon)\left|Q_{L}(s)\right|^{\frac{1/2+\varepsilon}{2}}\left|\gamma_{L}(s)\right|\left|\frac{s-2}{s-1}\right|^{k_L}e^{\frac{\pi f}{4}\eta t}
\end{equation*}
and
\begin{equation*}
    \beta_{L}:= \frac{d_L \pi}{4}-\sum_{j=1}^f \lambda_j\left(\arctan{\left(\frac{\Re(\lambda_j s+\mu_j)}{\left|\Im(\lambda_js+\mu_j)\right|}\right)}+\frac{1}{2\Re(\lambda_js+\mu_j)}+\frac{2}{\pi^2\left|\Im(\lambda_js+\mu_j)^2-\Re(\lambda_js+\mu_j)^2\right|}\right).
\end{equation*}
\begin{enumerate}[(i)]
\item If $\Im(\lambda_js+\mu_j)>0$ for all $j=1,\ldots,f$ and $\beta_{L}-\pi f \eta/4>0$, then
\begin{equation*}
\left|\sum_{l=0}^\infty F(t+lB)\right| \leq \frac{E_{\theta_{L}}(\varepsilon)}{1-e^{-(\beta_{L}-\pi f\eta/4)B}}.
\end{equation*}
\item  If $\Im(\lambda_js+\mu_j)<0$ for all $j=1,\ldots,f$ and $\beta_{L}+\pi f \eta/4>0$, then
\begin{equation*}
\left|\sum_{l=0}^\infty F(t-lB)\right| \leq \frac{E_{\theta_{L}}(\varepsilon)}{1-e^{-(\beta_{L}+\pi f\eta/4)B}}.
\end{equation*}
\end{enumerate}
\end{lemma}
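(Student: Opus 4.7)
The plan is to verify the claimed bound by first applying the convexity estimate for $|L(s)|$ on the critical line from Remark \ref{rmk:SelbergLs} to get a pointwise upper bound on $|F(\tau)|$, and then differentiating an explicit form of Stirling's formula to show that this upper bound decays geometrically along the sequence $\tau = t+lB$. I focus on part (i) and describe at the end the mirror argument for part (ii).

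First I would recall that $F(\tau) = z\mathcal{L}(1/2+i\tau)e^{\pi f\eta \tau/4} = z L(1/2+i\tau)\gamma_L(1/2+i\tau)e^{\pi f\eta \tau/4}$ with $|z|=1$. Applying Remark \ref{rmk:SelbergLs} at $\sigma=1/2$ together with the Euler-product bound $b_{1+\varepsilon}\le Z_{\theta_L}(1+\varepsilon)$, one obtains
$$
|F(\tau)| \;\le\; Z_{\theta_L}(1+\varepsilon)\,|Q_L(1/2+i\tau)|^{(1/2+\varepsilon)/2}\,|\gamma_L(1/2+i\tau)|\,\left|\frac{1/2+i\tau-2}{1/2+i\tau-1}\right|^{k_L} e^{\pi f\eta \tau/4} \;=:\; E(\tau),
$$
so that $E(t) = E_{\theta_L}(\varepsilon)$. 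It then suffices to prove $E(\tau) \le E_{\theta_L}(\varepsilon)\,e^{-(\tau-t)(\beta_L - \pi f\eta/4)}$ for all $\tau \ge t$ and sum the resulting geometric series, which immediately produces the denominator $1 - e^{-(\beta_L-\pi f\eta/4)B}$.

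The crucial estimate is $(d/d\tau)\log E(\tau) \le -(\beta_L - \pi f\eta/4)$ throughout $\tau \ge t$. The $-\pi f\eta/4$ piece is immediate from the exponential factor. For the $-\beta_L$ piece I would apply an explicit Stirling-type expansion to $\log|\Gamma(\lambda_j(1/2+i\tau)+\mu_j)|$, a convenient source being Binet's integral representation
$$
\log\Gamma(s) = (s-1/2)\log s - s + \tfrac{1}{2}\log(2\pi) + 2\int_0^\infty \frac{\arctan(u/s)}{e^{2\pi u}-1}\,du,
$$
whose remainder admits bounds involving $|\Im(s)^2-\Re(s)^2|^{-1}$. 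Differentiating in $\tau$ and using the identity $\arctan(v/u) = \pi/2 - \arctan(u/|v|)$ (valid in the upper half plane), the leading contribution is $-\pi d_L/4 + \sum_j \lambda_j \arctan(\Re/|\Im|)$, while the Stirling remainder produces precisely the corrections $\sum_j \lambda_j\bigl[1/(2\Re) + 2/(\pi^2|\Im^2-\Re^2|)\bigr]$ in $\beta_L$; the $\tau$-derivatives of $\log|Q_L|^{(1/2+\varepsilon)/2}$ and of the $k_L$-polynomial factor are of order $1/\tau$ and are absorbed into these corrections. Since each summand of $\beta_L$ is nondecreasing in $|\Im(\lambda_j s+\mu_j)|$, one has $\beta_L(\tau) \ge \beta_L(t)$ for $\tau \ge t$, so the pointwise derivative bound can be replaced uniformly by $-\beta_L(t)$, and integration from $t$ to $t+lB$ yields the claim.

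Part (ii) follows by a symmetric argument: when $\Im(\lambda_j s + \mu_j) < 0$, moving to $\tau = t - lB$ increases $|\Im|$, so the Stirling estimate still delivers decay of $|\gamma_L|$ at rate $\beta_L$, but now the exponential factor $e^{\pi f\eta\tau/4}$ contributes $+\pi f\eta/4$ rather than $-\pi f\eta/4$ to the net decay rate in the direction of decreasing $\tau$, yielding $\beta_L + \pi f\eta/4$. The hard part will be the precise Stirling calculation in step three: matching $\beta_L$ term-by-term (especially isolating the $2/(\pi^2|\Im^2-\Re^2|)$ remainder) requires a careful choice of the explicit Stirling error bound, and one must verify the monotonicity of $\beta_L$ in $|\Im|$ under the stated hypotheses so that the pointwise decay rate at $t$ is a valid uniform bound along the tail.
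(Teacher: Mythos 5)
Your overall strategy is correct and is exactly the one the paper intends: the paper states no proof of this lemma (it points the reader to Booker's Lemma 5.7 for the template), and your argument is the natural Selberg-class adaptation of that template. The skeleton is sound: bound $|F(\tau)|$ pointwise by $E(\tau)$ via Remark~\ref{rmk:SelbergLs} together with $b_{1+\varepsilon}\le Z_{\theta_L}(1+\varepsilon)$, note $E(t)=E_{\theta_L}(\varepsilon)$, establish the geometric decay $E(t+lB)\le E_{\theta_L}(\varepsilon)\,e^{-lB(\beta_L-\pi f\eta/4)}$, and sum the series; the mirror argument with $\beta_L+\pi f\eta/4$ handles part~(ii).

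The gap is that the decisive inequality $\tfrac{\mathrm{d}}{\mathrm{d}\tau}\log E(\tau)\le -(\beta_L-\pi f\eta/4)$ for $\tau\ge t$ is asserted rather than proved, and it is not as automatic as the write-up suggests. Three points need to be carried out rather than waved at. (1) The $\tau$-derivatives of $\tfrac{1/2+\varepsilon}{2}\log|Q_L(1/2+i\tau)|$ and of $k_L\log\big|\tfrac{s-2}{s-1}\big|$ are \emph{positive} in the regime of part~(i), so they oppose the decay; one must show they are dominated by the surplus between the true Stirling contribution $\lambda_j\Im(\lambda_j s+\mu_j)/(2|\lambda_j s+\mu_j|^2)$ and the larger term $\lambda_j/(2\Re(\lambda_j s+\mu_j))$ appearing in $\beta_L$, which is precisely where the extra $2/(\pi^2|\Im^2-\Re^2|)$ has to earn its keep. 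Saying these derivatives are ``of order $1/\tau$'' is not enough, since $\beta_L$ must bound the derivative uniformly starting at $\tau=t$. (2) Your monotonicity claim for $\beta_L$ in $|\Im(\lambda_j s+\mu_j)|$ fails when $|\Im(\lambda_j s+\mu_j)|<\Re(\lambda_j s+\mu_j)$: as $|\Im|$ increases toward $\Re$, the term $2/(\pi^2|\Im^2-\Re^2|)$ blows up and $\beta_L$ \emph{decreases}, so the pointwise rate at $t$ need not be a valid uniform rate along the tail without an additional hypothesis or a small case analysis. (3) The exact remainder form $2/(\pi^2|\Im^2-\Re^2|)$ comes from a specific explicit Stirling (Binet) error bound, which you do not identify; a different bound would give a different constant or exponent. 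Until these are checked, the proof is structurally right but has a real gap at its center.
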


\begin{remark}
    Using Lemma \ref{lemma:L}, we can replace $E$ in \cite[Lemma 5.7]{Booker2006} with
    \begin{equation*}
        E_{\theta}(\varepsilon):=Z_{\theta}(1+\varepsilon)\left|Q(2+s)\right|^{\frac{1/2+\varepsilon}{2}}\left|\gamma(s)\right|\left|\frac{P(s-2)}{P(s-1)}\right|,
    \end{equation*}
    where $\varepsilon\in (\theta,1/2]$, for Artin $L$-functions.
\end{remark}
Indeed, the last too lemmas explain how it is sufficient to consider only finitely many terms in the sums in \eqref{eq:Ftildehat}. Using our results for the function $G$, those finitely many cases can be computed in any wanted precision, which was the wanted goal.

\section{Proofs of examples \ref{ex:Artin} and \ref{ex:newform}}\label{sec: proofs of examples}
In this section, we provide the proofs of the examples given in Section \ref{sec:examples}.

\begin{proof}[Proof of Example \ref{ex:Artin}]
      By \cite[Lemma 4]{GL2022}, we have $b_{1+\varepsilon} \leq \zeta(1+\varepsilon)^r$. Using $\varepsilon=0.49$ and Lemma \ref{lemma:L}, we obtain estimate \ref{eq:LArtin} 
    for $\Re(s) \in[0.5,1.49]$.
\end{proof}

\begin{proof}[Proof of Example \ref{ex:newform}]
 Note that $L(s)$ is an entire function. By Lemma \ref{lemma:L},
  \begin{equation*}
    |L(s)|\leq b_{1+\varepsilon}|Q(2+s)|^{\frac{1+\varepsilon-\sigma}{2}},  \text{ for } \varepsilon \in (0, 1/2], \text{ } \sigma \in [-\varepsilon, 1+\varepsilon],
  \end{equation*}
 To obtain the bound \eqref{eq:Maass}, we choose $\varepsilon=0.4$. Using \cite[Proposition 2]{KS2003}, we can estimate 
    \begin{equation*}
        b_{1+\varepsilon} \leq \left|\zeta\left(1.4+\frac{7}{64}\right)\zeta\left(1.4-\frac{7}{64}\right)\right|.
    \end{equation*}
    In addition, we have
    \begin{equation*}
        \left|Q(s+2)\right|^{1/2} \leq \frac{N^{1/2}}{2\pi}\left(|t|+2+\sigma+\varepsilon'+|r'|\right) \leq
        \begin{cases}
        \frac{5N^{1/2}}{2\pi}\left(|t|+D_{s,f}\right) & \text{if } |t| <5, \\
        \frac{3N^{1/2}}{4\pi}\left(|t|+D_{s,f}\right) & \text{if } |t| \geq 5,
        \end{cases}
    \end{equation*}
    where $D_{s,f}:=3\sigma-1+\varepsilon'+|r'|+\frac{(2\sigma-1)^2}{1-\sigma+\varepsilon'}$. Hence, the claim follows.
\end{proof}

\section{Acknowledgement}
We would like to thank Ghaith Hiary for suggesting the research problem, as well as Siva Sankar Nair for discussions improving the manuscript. In addition, we would like to thank the organisers, Alia Hamieh, Ghaith Hiary, Habiba Kadiri, Allysa Lumley, Greg Martin and Nathan Ng, of the summer school ``Inclusive Paths in Explicit Number Theory,'' which was hosted by BIRS at UBC Okanagan during July 2023. This project would not have been possible without this amazing meeting. N. Paloj\"arvi would also like to thank the Finnish Cultural Foundation for supporting her research.

\printbibliography

\end{document}